\newtheorem{theorem}{Theorem}[section]
\newtheorem{proposition}[theorem]{Proposition}
\theoremstyle{definition}
\newtheorem{definition}[theorem]{Definition}
\newtheorem{example}[theorem]{Example}
\theoremstyle{remark}
\newtheorem{remark}[theorem]{Remark}
\newcommand{\NN}{\mathbb{N}}
\newcommand{\ZZ}{\mathbb{Z}}
\newcommand{\RR}{\mathbb{R}}
\newcommand{\fluc}[3]{J_{{#1},{#2}}\{{#3}\}}
\newcommand{\flucinf}[2]{J_{{#1}}\{{#2}\}}
\newcommand{\norm}[1]{\lVert {#1} \rVert}
\newcommand{\seq}[1]{\{{#1}\}}
\begin{document}
\title{The pointwise ergodic theorem on finitely additive spaces}
\author{Morenikeji Neri}
\maketitle
\date{\today}
\vspace*{-5mm}
\begin{center}
{\scriptsize 
Department of Mathematics, Technische Universit\"at Darmstadt\\
E-mail: neri@mathematik.tu-darmstadt.de}
\end{center}
\begin{abstract}
The almost sure convergence of ergodic averages in Birkhoff's pointwise ergodic theorem is known to fail in the finitely additive setting.

We introduce a natural reformulation of almost sure convergence suitable for finitely additive measures, which we call \emph{finite almost sure convergence}. Unlike the classical formulation, finite almost sure convergence only involves measures of finite unions and intersections, making it well adapted to finitely additive spaces.

Using this notion, we extend the pointwise ergodic theorem to finitely additive probability spaces. Our proof relies on demonstrating that several quantitative generalizations of the pointwise ergodic theorem remain valid in the finitely additive setting via an extension of the Calder\'on transference principle. The result then follows by exploiting the relationships between quantitative notions of almost sure convergence developed by the author and Powell (c.f.\ \emph{Trans.\ Amer.\ Math.\ Soc.\ Series B} \textbf{12} (2025), 974--1019).
\end{abstract}
\noindent 
{\bf Keywords:} Birkhoff's ergodic theorem; Finitely additive probability theory; Calder\'on transference; Metastability; Proof mining.\\
{\bf MSC2020 Classification:} 37A30, 03F10.
\section{Introduction}\label{sec:intro}
Let $(\Omega, \mathcal{F}, \mu, \tau)$ be a dynamical system, where $(\Omega, \mathcal{F}, \mu)$ is a probability space and $\tau\colon \Omega \to \Omega$ is a measure-preserving automorphism. Birkhoff's pointwise ergodic theorem asserts that for any $f \in L^1(\Omega, \mathcal{F}, \mu)$, the ergodic averages
\[
A_n f(\omega) := \frac{1}{n} \sum_{i=1}^n f(\tau^i(\omega))
\]
converge almost surely to the integral $\int f \, d\mu$.

It is well known that this convergence may fail when $\mu$ is only finitely additive (see, for example,~\cite{ramakrishnan1986finitely}). However, in~\cite{ramakrishnan1986finitely}, Ramakrishnan shows that the pointwise ergodic theorem does hold in the finitely additive setting, provided that the sequence of functions $\seq{A_nf}$ satisfies a certain technical regularity condition with respect to $\mu$ (which is automatically satisfied in the $\sigma$-additive case), and that
\[
\mu\!\left(\bigcup_{i=1}^\infty A_i\right) = 0
\quad \text{whenever} \quad
\mu(A_i) = 0 \text{ for all } i,
\]
where $\seq{A_i}$ is a sequence of sets in the sub $\sigma$-algebra of $\tau$-invariant sets, that is, $\tau^{-1}(A_i) = A_i$ for all $i$.

Many other limit theorems in probability theory also fail in the finitely additive setting, such as the strong law of large numbers and the martingale convergence theorem. A substantial body of work, primarily from the 1970s and 1980s, is devoted to recovering classical results of probability theory in the finitely additive context by imposing additional structural or regularity assumptions on the space or the functions involved. In particular, motivated by the insights of Dubins and Savage in their book \emph{How to Gamble If You Must} \cite{dubins65gamble} and by the technical developments of Purves and Sudderth \cite{purves1976some}, there has been considerable interest \cite{chen1976some,chen1977almost,karandikar1982general,ramakrishnan1984central} in establishing various limit theorems over products of finitely additive probability spaces that possess suitable continuity properties.

In this paper, we argue that the almost sure convergence in the pointwise ergodic theorem actually \emph{does} hold for finitely additive probability spaces, provided one adopts the \emph{correct} notion of almost sure convergence in this setting.
\subsection{Motivation of main results}
Recall that a sequence of random variables \(\seq{X_n}\) on a probability space \((\Omega, \mathcal{F}, \mu)\) converges almost surely if
\begin{enumerate}
    \item[(A)]
    $\mu\big(\{\omega \in \Omega : \forall \varepsilon > 0\, \exists N\in \NN\, \forall i,j \ge N\, (|X_i(\omega) - X_j(\omega)| \le \varepsilon) \}\big) = 1.$
\end{enumerate}
By invoking the continuity of measure (see Lemma 3.1 of \cite{neri-powell:pp:martingale}), this definition is equivalent to each of the following formulations:
\begin{enumerate}
    \item[(B)] \(\forall \varepsilon > 0 \,\left(\mu\big(\{\omega \in \Omega : \exists N \in \NN\, \forall i,j \ge N\, (|X_i(\omega) - X_j(\omega)| \le \varepsilon) \}\big) = 1\right).\)\medskip

    \item[(C)] \(\forall \varepsilon, \lambda > 0\, \exists N \in \NN\, \left(\mu\big(\{\omega \in \Omega : \forall i,j \ge N\, (|X_i(\omega) - X_j(\omega)| \le \varepsilon) \}\big) > 1 - \lambda\right).\)\medskip

    \item[(D)] \(\forall \varepsilon, \lambda > 0\, \exists N \in \NN\, \forall k\in \NN\, \left(\mu\big(\{\omega \in \Omega : \forall i,j \in [N;N+k]\,
    (|X_i(\omega) - X_j(\omega)| \le \varepsilon) \}\big) > 1 - \lambda\right).\)
\end{enumerate}
Here, and throughout the rest of this article, we write $[a;b] := [a,b] \cap \mathbb{Z}$.

However, these equivalences break down if we assume that $\mu$ is only finitely additive. One can readily verify that in this case 
\[
(A) \Rightarrow (B), \qquad (C) \Rightarrow (B), \qquad \text{and} \qquad (C) \Rightarrow (D),
\]
but no other implication holds in general, as explicit counterexamples can be constructed for each of the remaining directions. 

We have already noted that the pointwise ergodic theorem does not hold when almost sure convergence is interpreted as \((A)\). We can strengthen this result by showing that the theorem also fails under \((B)\) (and hence under \((C)\)):

\begin{example}
\label{ex:density}
Recall that a set \(A \subseteq \ZZ\) is said to have \emph{asymptotic density} if the limit  
\[
d(A) := \lim_{N \to \infty} \underbrace{\frac{|A \cap [-N;N]|}{2N+1}}_{=: d_N(A)} 
\]
exists. It is straightforward to verify that \(d\) defines a finitely additive probability measure on the algebra of all subsets of \(\ZZ\) for which the above limit exists. Using a Banach limit, this measure can be extended to a finitely additive probability measure \(\mu\) on the entire power set \(\mathcal{P}(\ZZ)\) (c.f.\ \cite{agnew1938extensions}).  

Fix such a measure \(\mu\). Let \(f := I_A\) denote the indicator function of a set \(A \subseteq \ZZ^+\) that does not possess an asymptotic density, and let \(\tau(\omega) := \omega + 1\) denote the shift map, which can easily be shown to be a measurable, invertible, measure-preserving transformation of $\ZZ$. Since \(A\) does not have an asymptotic density, we can take \(\varepsilon > 0\) such that for all $N$ there exists \(i,j \ge N\) satisfying
\[
|d_i(A) - d_j(A)| > \frac{3\varepsilon}{2}.
\]
Now, for \(n \in \NN\) and $\omega \in \ZZ$ with $n\ge -\omega$ we have 
\begin{equation*}
    \begin{aligned}
        A_n f(\omega)
    &= \frac{1}{n}\sum_{i=1}^n f(\tau^i(\omega))
    =  \frac{|A \cap [\omega+1;\omega + n]|}{n}\\
    & = \frac{2(n+\omega)+1}{n} \frac{|A \cap [-(\omega + n);\omega + n]|}{2(n+\omega)+1}
      - \frac{|A \cap [1;\omega]|}{n}.
    \end{aligned}
\end{equation*}
Thus, if $\omega \le 0$ we have 
\[
A_n f(\omega)
    =2d_{n+\omega}(A)+ \frac{2\omega+1}{n}d_{n+\omega}(A)
\]
and if $\omega >0$ we have
\[
A_n f(\omega)
    = 2d_{n+\omega}(A)
      + \frac{2\omega+1}{n}\big(d_{n+\omega}(A) - d_{\omega}(A)\big).
\]
In both cases, since \(d_k(A) \in [0,1]\) for all \(k \in \NN\), it follows that
\[
|A_n f(\omega) - 2d_{n+\omega}(A)| \le \frac{2\omega+1}{n}.
\]
Fix \(\omega \in \NN\). Suppose there exists \(N \in \NN\) such that for all \(i,j \ge N\),
\[
|A_i f(\omega) - A_j f(\omega)| \le \varepsilon.
\]
If \(i,j \ge \max\{N, (2\omega+1)/\varepsilon\}\), the reverse triangle inequality gives
\[
|2d_{j+\omega}(A) - 2d_{i+\omega}(A)|
    - |A_i f(\omega) - 2d_{i+\omega}(A) + 2d_{j+\omega}(A) - A_j f(\omega)|
    \le |A_i f(\omega) - A_j f(\omega)|
    \le \varepsilon.
\]
Hence,
\[
2|d_{j+\omega}(A) - d_{i+\omega}(A)|
    \le \varepsilon + \frac{2\omega+1}{i} + \frac{2\omega+1}{j}
    \le 3\varepsilon,
\]
which contradicts the way we picked $\varepsilon$.

Therefore,
\[
\bigl\{
    \omega \in \Omega :
    \exists N\in \NN\, \forall i,j \ge N,\,
    |A_i f(\omega) - A_j f(\omega)| \le \varepsilon
\bigr\} = \emptyset,
\]
and so property \((B)\) is not satisfied for the sequence $\seq{A_nf}$.
\end{example}

Thus, the only formulation of almost sure convergence we are left with is \((D)\), which motivates the following definition:
\begin{definition}
    Let $\Omega$ be a set, $\mathcal{F}$ be an algebra of subsets of $\Omega$, and $\mu$ a finitely additive probability measure on $\mathcal{F}$. A sequence $\seq{X_n}$ of real valued functions on $\Omega$ is said to be \emph{finitely almost surely convergent} if 
\begin{equation}
\label{eq:measureability}
    \forall i,j \in \NN\, \forall \varepsilon>0\, \left(\{\omega \in \Omega: |X_i(\omega)-X_j(\omega)|\le \varepsilon\} \in \mathcal{F}\right)
\end{equation}
and
\[
\forall \varepsilon, \lambda>0\, \exists N \in \NN\, \forall k\in \NN\, \left(\mu( \forall i,j \in [N;N+k]\, (|X_i-X_j| \le \varepsilon)) > 1-\lambda\right).
\]
\end{definition}
\begin{remark}
If $\mathcal{F}$ is a $\sigma$-algebra and $\seq{X_n}$ are random variables (which is the setting considered in \cite{ramakrishnan1986finitely} and also the context of Example \ref{ex:density}), the measurability condition in (\ref{eq:measureability}) can be omitted. In general, this condition corresponds to the notion of \emph{weak Borel measurability} introduced in \cite{NeriPischke2023}, which represents the minimal assumption ensuring that the definition remains meaningful when $\mathcal{F}$ is merely an algebra.
\end{remark}
\subsection{Main results}
We shall show that finite almost sure convergence furnishes the \emph{appropriate} analogue of almost sure convergence for extending the pointwise ergodic theorem to finitely additive probability spaces. Our main result is the following:
\begin{theorem}
\label{thrm:main}
    Let $(\Omega, \mathcal{F},\mu)$ be a finitely additive probability space, $\tau: \Omega \to \Omega$ a measure-preserving automorphism, and $f \in L^1:=L^1(\Omega, \mathcal{F},\mu)$. If $\seq{A_nf}$ satisfies
\[
\forall i,j \in \NN\,\forall \varepsilon>0\, \left(\{\omega \in \Omega: |A_if(\omega)-A_jf(\omega)|\le \varepsilon\} \in \mathcal{F}\right)
\]
then $\seq{A_nf}$ is finitely almost surely convergent.
\end{theorem}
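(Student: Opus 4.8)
The plan is to transfer a quantitative (``finitary'') refinement of Birkhoff's theorem — the \emph{jump inequality} for ergodic averages — from the shift on $\ZZ$ to the finitely additive space $(\Omega,\mathcal F,\mu)$ via Calder\'on transference, and then to derive finite almost sure convergence from it directly by an averaging argument over disjoint index windows. Throughout, the measurability hypothesis is exactly what will guarantee that the sets whose $\mu$-measure is invoked — always finite Boolean combinations of the sets in (\ref{eq:measureability}) — lie in $\mathcal F$.

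\emph{Transference.} Call a pair $i<j$ an \emph{$\varepsilon$-jump} of a sequence $\seq{y_n}$ if $|y_i-y_j|>\varepsilon$, and write $N_\varepsilon^M\seq{y_n}$ for the largest number of pairwise disjoint $\varepsilon$-jumps with all indices in $[1;M]$. The reason to work with $N_\varepsilon^M$ rather than, say, the $\rho$-variation is that $\{\omega:N_\varepsilon^M\seq{A_nf(\omega)}\ge K\}$ is a finite union of finite intersections of the sets $\{\omega:|A_if(\omega)-A_jf(\omega)|>\varepsilon\}$, hence lies in $\mathcal F$. On $\ZZ$, Bourgain's jump inequality (equivalently, the weak-type variational inequality of Jones--Kaufman--Rosenblatt--Wierdl) provides an exponent $\rho>0$ and a constant $C$ with
\[
\big|\{m\in\ZZ:\ N_\varepsilon^M\seq{A_ng(m)}_n\ge K\}\big|\ \le\ \frac{C}{\varepsilon K^{\rho}}\,\norm{g}_{\ell^1}
\]
for every finitely supported $g\colon\ZZ\to\RR$ and all $K,M$. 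For $\omega\in\Omega$ let $F_\omega(m):=f(\tau^m(\omega))$, so that $A_nf(\tau^m(\omega))=\frac{1}{n}\sum_{l=1}^n F_\omega(m+l)$; once $n\le M$ and $|m|\le R$, this average agrees with that of the finitely supported sequence $g_\omega$ given by $g_\omega(m):=f(\tau^m(\omega))$ for $|m|\le R+M$ and $g_\omega(m):=0$ otherwise. Applying the $\ZZ$-inequality to $g_\omega$ for each $\omega$, averaging over $\Omega$ by writing $\mu(B)=\tfrac{1}{2R+1}\sum_{|m|\le R}\mu(\tau^{-m}B)$ for the relevant set $B$ and using the measure-preservation of $\tau$, together with $\int\norm{g_\omega}_{\ell^1}\,d\mu=(2(R+M)+1)\norm{f}_1$, and finally letting $R\to\infty$ (so that $(2(R+M)+1)/(2R+1)\to 1$ and the boundary errors of the kind $\tfrac{2\omega+1}{n}$ in Example \ref{ex:density} vanish) should yield, for every $\varepsilon,\lambda>0$, a number $K=K(\varepsilon,\lambda,\norm{f}_1)$ with
\[
\mu\big(N_\varepsilon^M\seq{A_nf}\ge K\big)\ <\ \lambda\qquad\text{for all }M.
\]

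\emph{From the jump bound to finite almost sure convergence.} Suppose $\seq{A_nf}$ were not finitely almost surely convergent: there are $\varepsilon,\lambda>0$ such that for each $N$ some $k$ satisfies $\mu(\exists i,j\in[N;N+k]\,(|A_if-A_jf|>\varepsilon))\ge\lambda$. Choosing such windows with successively larger starting points produces pairwise disjoint windows $W_1,W_2,\dots$ with $\mu(E_l)\ge\lambda$ for $E_l:=\{\exists i,j\in W_l\,(|A_if-A_jf|>\varepsilon)\}$. Take $K=K(\varepsilon,\lambda/2,\norm{f}_1)$ from the transferred bound, fix $L$ with $L>2K/\lambda$, and $M$ past all of $W_1,\dots,W_L$. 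If $\omega$ lies in $K$ of the disjoint sets $E_1,\dots,E_L$ then, picking a jump inside each corresponding window, $\seq{A_nf(\omega)}$ has $K$ disjoint $\varepsilon$-jumps in $[1;M]$; hence $\mu(\#\{l\le L:\omega\in E_l\}\ge K)\le\mu(N_\varepsilon^M\seq{A_nf}\ge K)<\lambda/2$. But then
\[
L\lambda\ \le\ \sum_{l=1}^L\mu(E_l)\ =\ \int\#\{l\le L:\omega\in E_l\}\,d\mu\ <\ K+L\cdot\tfrac{\lambda}{2},
\]
which is impossible for $L>2K/\lambda$. So $\seq{A_nf}$ is finitely almost surely convergent, as required.

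\emph{Main obstacle.} The substantive part is the transference step. Classical Calder\'on transference integrates a pointwise maximal inequality over the space and uses Fubini together with $\sigma$-additivity; here one must instead keep every estimate localised to a finite window so that only finite sums and finitely many measure-preserving maps occur, control the boundary error terms uniformly as the window grows, and — because of the restricted measurability hypothesis — phrase the transferred inequality purely through finite Boolean combinations of the sets $\{\omega:|A_if(\omega)-A_jf(\omega)|>\varepsilon\}$, which is precisely why the jump count, rather than the variation, is the right quantity to transfer. I should also note that finite almost sure convergence does not transfer from the $\sigma$-additive setting in any direct fashion — it demands one threshold $N$ valid for all window-lengths $k$ — so it genuinely has to be routed through a transferable quantitative notion; the jump bound above is one such, and a rate of metastability (from which finite almost sure convergence likewise follows, by turning a would-be counterexample window-length $k(N)$ into a function $g(N):=N+k(N)$ and applying metastability to it) is another.
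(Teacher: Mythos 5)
Your proposal is correct and follows essentially the same route as the paper: a finitely additive Calder\'on transference of the Jones--Kaufman--Rosenblatt--Wierdl fluctuation (jump) inequality, with measurability secured by noting that the jump-count level sets are finite Boolean combinations of the sets $\{\omega: |A_if(\omega)-A_jf(\omega)|\le \varepsilon\}$, followed by a disjoint-window counting argument. The only differences are presentational: you transfer via growing windows on $\ZZ$ with $R\to\infty$ rather than the paper's periodization onto $[1,2K]$, and you inline the counting step that the paper packages as ``modulus of finite fluctuations $\Rightarrow$ learnable rate of uniform convergence $\Rightarrow$ uniform metastability $\Rightarrow$ finite almost sure convergence''.
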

The proof of Theorem \ref{thrm:main} is quite subtle. First, we observe that the quantitative results on the fluctuations of ergodic averages established in \cite{jones:1998:oscillation} remain valid in the finitely additive setting. This is achieved by verifying that a suitable version of the Calder\'on transference principle \cite{calderon1968ergodic}, which we present in Theorem \ref{thrm:trans}, holds for finitely additive spaces. We then combine these results with the insights developed in \cite{neri-powell:pp:martingale} on the relationships between various quantitative formulations of almost sure convergence to obtain the desired conclusion. 

Furthermore, we obtain several quantitative strengthenings of Theorem \ref{thrm:main}, including a bound on the \emph{uniform metastability} of the ergodic avergaes (c.f.\ \cite{AVIGAD-GERHARDY-TOWSNER:10:Ergodic,neri-powell:pp:martingale,tao:07:softanalysis}):
\begin{theorem}
\label{thrm:mainmeta}
Let $(\Omega, \mathcal{F}, \mu)$ be a finitely additive probability space, $\tau: \Omega \to \Omega$ a measure-preserving automorphism, and $f \in L^1(\Omega, \mathcal{F}, \mu)$. Suppose that the sequence $\seq{A_n f}$ satisfies
\[
\forall i,j \in \NN\, \forall \varepsilon > 0\, \left(\{\omega \in \Omega : |A_i f(\omega) - A_j f(\omega)| \le \varepsilon\} \in \mathcal{F}\right).
\]
Then
\[
\forall \varepsilon, \lambda \in (0,1)\, \forall g:\NN \to \NN\, \exists n \le \tilde g^{(\lceil \delta(\lambda, \varepsilon) \rceil)}(0)\, \left(\mu\Big(\forall i,j \in [n, n + g(n)]\, (|A_i f - A_j f| \le \varepsilon)\Big) > 1 - \lambda\right),
\]
for some numerical constant $c$, where
\[
\delta(\lambda, \varepsilon) := \frac{c}{\lambda} \left( \frac{\|f\|_1}{\varepsilon \lambda} \right)^2,
\quad \text{and} \quad
\tilde g(n) := n + g(n),
\]
with $\tilde g^{(i)}$ denoting the $i$th iterate of $\tilde g$.
\end{theorem}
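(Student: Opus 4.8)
The plan is to derive Theorem~\ref{thrm:mainmeta} from two components: a \emph{weak-type fluctuation inequality} for the ergodic averages that survives the passage to finitely additive measures, and an elementary combinatorial step turning such an inequality into a uniform metastability rate. Throughout one exploits that, for \emph{finite} $M$, the set $\{\fluc{\varepsilon}{M}{A_\bullet f}\ge k\}$ is a finite Boolean combination of the sets $\{|A_if-A_jf|\le\varepsilon'\}$ that the hypothesis places in $\mathcal F$; this is exactly why the measurability assumption of the theorem is the ``right'' one and why one must work with truncated fluctuation counts.

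\emph{Transference of Jones's inequality.} First I would transfer the weak-type jump/fluctuation inequality of \cite{jones:1998:oscillation} for the shift averages $M_N a(n):=\frac1N\sum_{i=1}^{N} a(n+i)$ on $\ZZ$, namely $\bignorm{\sup_{\varepsilon>0}\varepsilon\,\sqrt{\flucinf{\varepsilon}{M_\bullet a}}}_{\ell^{1,\infty}(\ZZ)}\le C\,\norm{a}_{\ell^1(\ZZ)}$, to the system $(\Omega,\mathcal F,\mu,\tau)$ via the finitely additive Calder\'on transference principle (Theorem~\ref{thrm:trans}). Since the fluctuation functional is assembled pointwise from the orbit averages, the usual transference argument goes through once one has the finitely additive principle; the only delicate point is to phrase everything for finite truncations $M$ so that every event involved lies in $\mathcal F$. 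The upshot is a numerical constant $C$ with
\[
\mu\big(\fluc{\varepsilon}{M}{A_\bullet f}\ge k\big)\ \le\ \frac{C\norm{f}_1}{\varepsilon\sqrt{k}}\qquad\text{for all }\varepsilon>0,\ k,M\in\NN,
\]
equivalently a \emph{rate of fluctuation} $F(\varepsilon,\lambda):=\ceil{(C\norm{f}_1/(\varepsilon\lambda))^2}$ satisfying $\mu(\fluc{\varepsilon}{M}{A_\bullet f}\ge F(\varepsilon,\lambda))\le\lambda$ uniformly in $M$.

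\emph{From a fluctuation rate to uniform metastability.} Fix $\varepsilon,\lambda\in(0,1)$ and $g$, and put $k^\ast:=F(\varepsilon,\lambda/2)$, $K:=\ceil{2k^\ast/\lambda}+1$, $a_m:=\tilde g^{(m)}(0)$. Suppose, for contradiction, that $\mu(\forall i,j\in[a_m,a_{m+1}]\,(|A_if-A_jf|\le\varepsilon))\le 1-\lambda$ holds for every $m<K$ (this is the negation applied at $n=a_m\le a_K$). By finite additivity the sets $B_m:=\Omega\setminus\bigcap_{i,j\in[a_m,a_{m+1}]}\{|A_if-A_jf|\le\varepsilon\}\in\mathcal F$ then satisfy $\mu(B_m)\ge\lambda$. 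Let $N:=\sum_{m<K}I_{B_m}$; all its level sets are finite Boolean combinations of the $B_m$, so $\int N\,d\mu=\sum_{m<K}\mu(B_m)\ge K\lambda$. On the other hand, for each $\omega$ the $\varepsilon$-separated index pairs witnessing membership in the blocks $B_m$ (one per block, listed in increasing order) form an $\varepsilon$-fluctuation configuration of length $N(\omega)$ inside $[0,a_K]$, so $N(\omega)\le\fluc{\varepsilon}{a_K}{A_\bullet f(\omega)}$ pointwise; splitting $\int N\,d\mu$ over $\{\fluc{\varepsilon}{a_K}{A_\bullet f}<k^\ast\}$ (where $N\le k^\ast-1$) and its complement (of measure $\le\lambda/2$, where $N\le K$) gives $K\lambda\le (k^\ast-1)+K\lambda/2$, hence $K<2k^\ast/\lambda$ — contradicting the choice of $K$. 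Therefore some $n\le a_K=\tilde g^{(K)}(0)$ satisfies $\mu(\forall i,j\in[n,n+g(n)]\,(|A_if-A_jf|\le\varepsilon))>1-\lambda$, and inserting $k^\ast=\ceil{(2C\norm{f}_1/(\varepsilon\lambda))^2}$ bounds $K$ by $\frac{c}{\lambda}(\norm{f}_1/(\varepsilon\lambda))^2$ for an absolute constant $c$, which is the asserted $\delta(\lambda,\varepsilon)$. (This extraction may equally be invoked directly from the rate-translation results of \cite{neri-powell:pp:martingale}.)

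\emph{Expected obstacle.} The substance is the first step: verifying that the Calder\'on transference of Theorem~\ref{thrm:trans} applies to a \emph{weak-type} inequality for the nonlinear — but orbit-local — fluctuation functional, and that the transferred estimate can be read off for each finite truncation $M$ while keeping all relevant sets inside the algebra $\mathcal F$. The combinatorial step is elementary, but the factor $1/\lambda$ produced by the term $K\lambda/2$ in the decomposition of $\int N\,d\mu$ — precisely what upgrades the $\lambda^{-2}$ of the fluctuation rate to the $\lambda^{-3}$ appearing in $\delta$ — must be tracked with care, and it is here that the exact shape of $\delta(\lambda,\varepsilon)$ is pinned down.
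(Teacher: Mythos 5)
Your proposal is correct and follows essentially the same route as the paper: the finitely additive Calder\'on transference of the Jones--Kaufman--Rosenblatt--Wierdl fluctuation inequality (the paper's Theorem \ref{thrm:finjones}) yields a modulus of finite fluctuations, which is then converted into a rate of uniform metastability. Your ``combinatorial step'' is just an inlined version of the paper's Propositions \ref{prop:abs:fluc:to:learn} and \ref{prop:metaequivlearn} (summing indicators of bad blocks along the orbit $a_m=\tilde g^{(m)}(0)$ and bounding by the truncated fluctuation count), and it reproduces the same constant $\delta(\lambda,\varepsilon)=\tfrac{8}{\lambda}(C\norm{f}_1/(\varepsilon\lambda))^2$.
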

\subsection{Connections with mathematical logic and the proof mining program}
\label{sec:pm}
This work can be seen as a contribution to Kohlenbach's \emph{proof mining} program \cite{kohlenbach:08:book}. Proof mining employs tools from mathematical logic to analyze proofs in mainstream mathematics, yielding significant quantitative and qualitative improvements to known results.

There has been increasing interest in extending the proof mining program to probability theory. Recent developments include results on the laws of large numbers \cite{neri:kronecker:24,neri:quant:slln}, studies on the asymptotic behaviour of stochastic processes \cite{NeriPischkePowelllearn,neri-powell:pp:martingale}, and work on stochastic optimization \cite{NPP2025a,NeriPowell:RS:2024,PischkePowell:Halpern:2024}.

A key feature of analyzing proofs within the framework of proof mining is that one is often forced to strip a proof to its essential structure, which opens the door to generalizations.  The results in this paper also benefit from this aspect of proof mining. Studying convergence results in probability theory within this framework, particularly in light of the logical system introduced in \cite{NeriPischke2023}, reveals that $\sigma$-additivity is typically used only sparingly in many quantitative analyses. Consequently, many arguments can be lifted to the finitely additive setting. This was already observed in the investigation of quantitative aspects of the ergodic theorem in \cite{neri-powell:pp:martingale} and motivated the belief that an appropriate generalization of the pointwise ergodic theorem must exist for finitely additive probability spaces, which we present in this article.
\subsection{Organization of the rest of the paper}
The structure of the paper is as follows. In Section \ref{subsec:int}, we recall the theory of integration for finitely additive measures. Section \ref{subsec:probconv} reviews the relevant notions and results concerning quantitative formulations of almost sure convergence and their connection to the pointwise ergodic theorem, as presented in \cite{neri-powell:pp:martingale} and \cite{jones:1998:oscillation}. The proof of our main results are presented in Section \ref{sec:main}. 
\section{Preliminary definitions and lemmas}
\label{sec:prelim}
Throughout this section, fix a set $\Omega$, an algebra of subsets $\mathcal{F}$, and a finitely additive probability measure $\mu$.

\subsection{Integration on finitely additive spaces}
\label{subsec:int}
 We review the theory of integration on finitely additive probability spaces and recall the properties we shall need in this paper. Our exposition closely follows the seminal monograph by K. P. S. Bhaskara Rao and M. Bhaskara Rao \cite{RR1983}.

The first step is to define the integral of \emph{simple functions}.

\begin{definition}
A real-valued function $f$ on $\Omega$ is said to be \emph{simple} if there exist sets $\{A_i\}_{i=1}^n \subseteq \mathcal{F}$ forming a partition of $\Omega$ and real numbers $a_i \in \mathbb{R}$ such that, for all $\omega \in \Omega$,
\[
f(\omega) = \sum_{i=1}^n a_i I_{A_i}(\omega).
\]
For such a function $f$, the integral with respect to $\mu$ is defined by
\[
\mu(f) := \sum_{i=1}^n a_i \mu(A_i).
\]
\end{definition}
It is straightforward to verify that $\mu(f)$ is independent of the particular representation of $f$. Moreover, linear combinations of simple functions are simple, and the absolute value of a simple function is simple (see Propositions 4.4.2 and 4.4.4 in \cite{RR1983}).

As in the $\sigma$-additive case, a general \emph{integrable} function $f : \Omega \to \mathbb{R}$ will be defined as the limit of simple functions. In the $\sigma$-additive case, taking this limit to be almost surely, or in probability leads to equivalent definitions. In the finitely additive case, we adopt convergence in probability:

\begin{definition}[c.f.\ Definition 4.3.1 of \cite{RR1983}]
A sequence $\{f_n\}$ of real-valued functions on $\Omega$ is said to \emph{converge hazily} to a function $f$ if
\[
\forall \varepsilon > 0 \, \left(\lim_{n \to \infty} \mu^*\big(\{\omega \in \Omega : 
|f_n(\omega) - f(\omega)| > \varepsilon \}\big) = 0\right),
\]
where $\mu^*:\mathcal{P}(\Omega) \to [0,1]$ denotes the outer measure of $\mu$, defined by
\[
\mu^*(A) := \inf\{\mu(B) : A \subseteq B,\, B \in \mathcal{F}\}.
\]
\end{definition}

We can now define the integral for general functions.

\begin{definition}[c.f.\ Definition 4.4.11 of \cite{RR1983}]
A real-valued function $f$ on $\Omega$ is said to be \emph{integrable} if there exists a sequence $\{f_n\}$ of simple functions such that $\{f_n\}$ converges hazily to $f$ and
\[
\lim_{n,m \to \infty} \mu(|f_n - f_m|) = 0.
\]
Such a sequence $\{f_n\}$ is called a \emph{determining sequence} for $f$.  
The integral of $f$ is then defined by
\[
\mu(f) := \lim_{n \to \infty} \mu(f_n).
\]
Furthermore, we define
\[
L_1(\Omega, \mathcal{F}, \mu) := \{ f : \Omega \to \mathbb{R} \mid |f| \text{ is integrable} \}
\]
and write $\norm{f}_1$ for $\mu(\vert f\vert)$.
\end{definition}

The fact that the above definition of the integral is well-defined (that is, independent of the choice of determining sequence and finite) follows from Proposition 4.4.10 in \cite{RR1983}.

Furthermore, the finitely additive integral satisfies the expected fundamental structural properties (cf.\ Proposition 4.4.13 in \cite{RR1983}). In particular, linear combinations of integrable functions are themselves integrable, the integral defines a linear operator on the space of integrable functions, and it is monotone: if $f,g$ are integrable and $f(\omega) \le g(\omega)$ for all $\omega \in \Omega$, then $\mu(f) \le \mu(g)$.

The final result we require concerning integration on finitely additive probability spaces involves the composition of integrable functions with measure-preserving automorphisms. Specifically, we consider an invertible map $\tau \colon \Omega \to \Omega$ satisfying $\tau^{-1}(A) \in \mathcal{F}$ and $\mu(\tau^{-1}(A)) = \mu(A)$ for all $A \in \mathcal{F}$.

\begin{proposition}
If $f : \Omega \to \RR$ is integrable and $\tau : \Omega \to \Omega$ measure-preserving automorphism, then the function $\Tilde{f} : \Omega \to \RR$ defined by
\[
\Tilde{f}(\omega) := f(\tau(\omega))
\]
is integrable, and $\mu(\Tilde{f}) = \mu(f)$.
\end{proposition}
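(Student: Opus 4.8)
The plan is to transfer a determining sequence for $f$ across $\tau$. I would fix a determining sequence $\{f_n\}$ of simple functions for $f$ — so $\{f_n\}$ converges hazily to $f$ and $\lim_{n,m\to\infty}\mu(|f_n-f_m|)=0$ — and show that $\{f_n\circ\tau\}$ is a determining sequence for $\tilde f$. This yields at once that $\tilde f$ is integrable, and reading off the definition of the integral gives $\mu(\tilde f)=\lim_n\mu(f_n\circ\tau)$, which I would then identify with $\mu(f)$.

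First I would check that each $f_n\circ\tau$ is simple and has the same integral as $f_n$. Writing $f_n=\sum_{i=1}^k a_i I_{A_i}$ for a partition $\{A_i\}_{i=1}^k\subseteq\mathcal F$ of $\Omega$, we have $f_n\circ\tau=\sum_{i=1}^k a_i I_{\tau^{-1}(A_i)}$; since $\tau^{-1}$ preserves membership in $\mathcal F$, preserves disjointness, and (by surjectivity of $\tau$) satisfies $\bigcup_i\tau^{-1}(A_i)=\tau^{-1}(\Omega)=\Omega$, the sets $\tau^{-1}(A_i)$ form a partition of $\Omega$ by elements of $\mathcal F$, so $f_n\circ\tau$ is simple, and measure-preservation gives $\mu(f_n\circ\tau)=\sum_i a_i\mu(\tau^{-1}(A_i))=\sum_i a_i\mu(A_i)=\mu(f_n)$. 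Applying the same observation to the simple function $|f_n-f_m|$ and using $|f_n-f_m|\circ\tau=|f_n\circ\tau-f_m\circ\tau|$ transfers the Cauchy condition: $\lim_{n,m\to\infty}\mu(|f_n\circ\tau-f_m\circ\tau|)=\lim_{n,m\to\infty}\mu(|f_n-f_m|)=0$.

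Next I would verify that $\{f_n\circ\tau\}$ converges hazily to $\tilde f$. Here the identity
\[
\{\omega\in\Omega:|f_n(\tau(\omega))-f(\tau(\omega))|>\varepsilon\}=\tau^{-1}\big(\{\omega\in\Omega:|f_n(\omega)-f(\omega)|>\varepsilon\}\big)
\]
reduces the task to the claim that $\mu^*(\tau^{-1}(B))\le\mu^*(B)$ for every $B\subseteq\Omega$, which holds because any $C\in\mathcal F$ with $B\subseteq C$ gives $\tau^{-1}(B)\subseteq\tau^{-1}(C)\in\mathcal F$ with $\mu(\tau^{-1}(C))=\mu(C)$, whence one takes the infimum over such $C$. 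Thus $\mu^*(\{|f_n\circ\tau-\tilde f|>\varepsilon\})\le\mu^*(\{|f_n-f|>\varepsilon\})\to 0$. Combining this with the previous step, $\{f_n\circ\tau\}$ is a determining sequence for $\tilde f$, and the conclusion follows.

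I expect the only delicate point to be the haze-convergence step: one must work with the outer measure $\mu^*$ rather than $\mu$ (since $f$, unlike the $f_n$, need not be measurable), and notice that only the one-sided estimate $\mu^*(\tau^{-1}(B))\le\mu^*(B)$ is needed — this uses measurability and measure-preservation of $\tau$ but not of $\tau^{-1}$. Everything else is a mechanical transfer through the identities $f_n\circ\tau=\sum_i a_iI_{\tau^{-1}(A_i)}$ and $\{|g\circ\tau|>\varepsilon\}=\tau^{-1}(\{|g|>\varepsilon\})$.
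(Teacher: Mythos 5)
Your proposal is correct and follows essentially the same route as the paper: push a determining sequence for $f$ through $\tau$, transfer the Cauchy condition via measure-preservation on simple functions, and handle hazy convergence through the identity $\{|g\circ\tau|>\varepsilon\}=\tau^{-1}(\{|g|>\varepsilon\})$ together with the behaviour of $\mu^*$ under $\tau^{-1}$. Your observation that only the one-sided estimate $\mu^*(\tau^{-1}(B))\le\mu^*(B)$ is needed is a small sharpening of the paper's claim, which proves the full equality using invertibility, but it does not change the argument.
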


\begin{proof}
The result is immediate if $f$ is simple. Let $\seq{f_n}$ be a determining sequence for $f$, and define $\seq{\Tilde{f_n}}$ by
\[
\Tilde{f_n}(\omega) := f_n(\tau(\omega)).
\]
Since $\seq{f_n}$ and $\seq{\Tilde{f_n}}$ are sequences of simple functions and $\tau$ is measure preserving, we have
\[
\mu(f_n) = \mu(\Tilde{f_n}) \quad \text{for all } n,
\]
and hence, if $\seq{\Tilde{f_n}}$ is a determining sequence for $\Tilde{f}$, we have 
\[
\mu(\Tilde{f}) = \lim_{n \to \infty} \mu(\Tilde{f_n}) 
= \lim_{n \to \infty} \mu(f_n) = \mu(f).
\]
Thus, it remains to show that $\seq{\Tilde{f_n}}$ is indeed a determining sequence for $\Tilde{f}$.  

We first note the following.

\medskip
\noindent
\textbf{Claim.} If $\tau$ is a measure-preserving automorphism, then for every $E \subseteq \Omega$,
\[
\mu^*(\tau^{-1}(E)) = \mu^*(E).
\]
\emph{Proof of claim.}
For any $B \in \mathcal{F}$ with $E \subseteq B$, we have $\tau^{-1}(E) \subseteq \tau^{-1}(B)$ and
$\mu(\tau^{-1}(B)) = \mu(B)$.
Taking the infimum over all such $B$ yields
\[
\mu^*(\tau^{-1}(E)) 
\le \inf\{\mu(\tau^{-1}(B)) : E \subseteq B,\, B \in \mathcal{F}\}
= \mu^*(E).
\]
The reverse inequality follows by applying the same argument to $\tau^{-1}$.

\hfill$\blacksquare$
\medskip

Now, for all $\varepsilon > 0$ and $n \in \NN$,
\[
\mu^*\big(\{\omega : 
|\Tilde{f_n}(\omega) - \Tilde{f}(\omega)| > \varepsilon \}\big)
= \mu^*\big(\tau^{-1}(\{\omega : 
|f_n(\omega) - f(\omega)| > \varepsilon \})\big)
= \mu^*\big(\{\omega : 
|f_n(\omega) - f(\omega)| > \varepsilon \}\big).
\]
Thus, since $\seq{f_n}$ converges hazily to $f$, it follows that $\seq{\Tilde{f_n}}$ converges hazily to $\Tilde{f}$.

Finally, for all $n,m \in \NN$,
\[
\mu(|\Tilde{f_n} - \Tilde{f_m}|) 
= \mu(|f_n - f_m|),
\]
and hence
\[
\lim_{n,m \to \infty} \mu(|\Tilde{f_n} - \Tilde{f_m}|) 
= \lim_{n,m \to \infty} \mu(|f_n - f_m|) = 0.
\]
This shows that $\seq{\Tilde{f_n}}$ is indeed a determining sequence for $\Tilde{f}$, completing the proof.
\end{proof}

\subsection{Quantitative notions of stochastic convergence}
\label{subsec:probconv}
Quantitative aspects of the pointwise ergodic theorem have been studied extensively in the $\sigma$-additive setting. As we shall see in the following section, many of these quantitative results carry over to the finitely additive setting, since one can establish a version of the Calder\'on transference principle \cite{calderon1968ergodic} for finitely additive spaces. In this section, we review the relevant quantitative notions of almost sure convergence that are pertinent to our work.

The most natural quantitative interpretation of the convergence of a sequence of real numbers $\seq{x_n}$ is known as a \emph{rate of convergence}, which is a function $\phi:(0,1) \to \RR^+$ satisfying
\[
\forall \varepsilon \in (0,1)\, \forall i,j \ge \phi(\varepsilon)\,(|x_i - x_j| \le \varepsilon).
\]
There are many convergence results in analysis where one cannot obtain a uniform rate of convergence for the sequences satisfying the premises of these results; a notable example is the monotone convergence theorem, where no uniform rate of convergence exists for all monotone sequences in $[0,1]$, since such sequences can converge arbitrarily slowly. This remains true even for the class of computable sequences of rational numbers \cite{specker:49:sequence}.
.

In situations where a uniform rate of convergence cannot be obtained, one can often obtain uniform bounds on an equivalent reformulation, known as \emph{metastable convergence}:
\begin{equation}
\label{eqn:metastable}
\forall \varepsilon>0\, \forall g:\NN\to \NN \, \exists n \, \forall i,j\in [n;n+g(n)]\, (|x_i-x_j|\leq \varepsilon). 
\end{equation}
It is easy to see that \eqref{eqn:metastable} is equivalent to the convergence of $\seq{x_n}$. Indeed, if $\seq{x_n}$ does not converge, then there exists some $\varepsilon>0$ such that
\[
\forall n\, \exists i,j\ge n \, (|x_i-x_j|> \varepsilon),
\]
so we can take a function $g:\NN\to\NN$ bounding $i,j$ depending on $n$, i.e.
\[
\forall n\, \exists i,j \in [n;n+g(n)]\, (|x_i-x|> \varepsilon),
\]
which contradicts \eqref{eqn:metastable}. The converse implication is immediate.

A \emph{rate of metastability} is a functional $\Phi$ bounding $n$ in \eqref{eqn:metastable}, i.e.
\[
\forall \varepsilon \in (0,1)\, \forall g:\NN\to \NN\, \exists n \le \Phi(\varepsilon,g)\, \forall i,j\in [n;n+g(n)] \, (|x_i-x_j|\leq \varepsilon). 
\]

\begin{remark}
One can easily verify that if $\seq{x_n} \subseteq [0,1]$ is monotone, then 
\[
\Phi(\varepsilon,g) := \tilde g^{(\lceil 1/\varepsilon \rceil)}(0)
\]
is a rate of metastability for $\seq{x_n}$, where $\tilde g(n) := n + g(n)$ and $\tilde g^{(i)}$ denotes the $i$th iteration of $\tilde g$.
\end{remark}

\begin{remark}
The term \emph{metastability} originates from Tao \cite{tao:07:softanalysis}, where he discussed metastable versions of the monotone convergence principle and other ``infinitary'' statements. From the perspective of logic, metastability \eqref{eqn:metastable} corresponds to the Herbrand normal form of convergence, and a rate of metastability provides a solution to the so-called ``no-counterexample interpretation'' of the convergence statement \cite{kreisel:51:proofinterpretation:part1, kreisel:52:proofinterpretation:part2}. Furthermore, there are theorems from mathematical logic guaranteeing the extractibility of uniform rates of metastability for large classes of proofs. Such uniformity was required in the proof of the convergence of the ergodic averages given in \cite{tao:08:ergodic}, where a uniform quantitative version of the dominated convergence theorem was required. Indeed, extractions of rates of metastability by proof-theoretic methods are common results in proof mining (e.g.\ \cite{kohlenbach2012effective, kohlenbach2021finitary,NeriPowell2023, powell2023finitization}). Furthermore, as we shall see later, metastability is intimately related to the notion of fluctuations of sequences, a concept that has attracted considerable attention in ergodic theory and probability theory (see, for example, \cite{avigad2015oscillation, kachurovskii:96:convergence, kalikow1999fluctuations}).
\end{remark}
In the stochastic setting, the analogue of a rate of convergence is a function 
$\phi: (0,1) \times (0,1)  \to \RR^+$ satisfying
\[
\forall \varepsilon, \lambda \in (0,1)\, \forall k \in \NN\, \left(\mu\Big(\forall i,j \in [\phi(\lambda,\varepsilon);\phi(\lambda,\varepsilon)+k] \, (|X_i - X_j| \le \varepsilon) \Big) > 1 - \lambda\right),
\]
where $\seq{X_n}$ is a sequence of real-valued functions on $\Omega$ satisfying
\[
\forall i,j \in \NN\, \forall \varepsilon \in (0,1)\, \left(\{\omega \in \Omega: |X_i(\omega)-X_j(\omega)|\le \varepsilon\} \in \mathcal{F}\right).
\]
Such a function is sometimes called a \emph{rate of almost sure convergence}. In Theorem 5.1 of \cite{AVIGAD-GERHARDY-TOWSNER:10:Ergodic}, it is demonstrated that no computable such rate can exist for the convergence of the ($\sigma$-additive) pointwise ergodic theorem. Motivated by the deterministic case, the notion of \emph{uniform metastable convergence}\footnote{There is also a notion of pointwise metastable convergence (c.f.\ Remark \ref{rem:pointwise}).} was introduced in \cite{AVIGAD-GERHARDY-TOWSNER:10:Ergodic}:

\begin{definition}
Let $\seq{X_n}$ be a sequence of real-valued functions on $\Omega$ satisfying
\[
\forall i,j \in \NN\, \forall \varepsilon>0\, \left(\{\omega \in \Omega: |X_i(\omega)-X_j(\omega)|\le \varepsilon\} \in \mathcal{F}\right).
\]
We say that $\seq{X_n}$ is \emph{uniformly metastable} if
\[
\forall \varepsilon,\lambda>0\, \forall g:\NN\to \NN\, \exists n\in \NN\,\left( \mu\Big(\forall i,j \in [n,n+g(n)] \, (|X_i - X_j| \le \varepsilon) \Big) > 1 - \lambda\right).
\]
\end{definition}
\begin{remark}
In \cite{Avigad-Dean-Rute:Dominated:12}, uniform metastability was introduced under the name of \emph{$\lambda$-uniform $\varepsilon$-metastable convergence} and given in a slightly different form. The formulation we present here is from \cite{neri-powell:pp:martingale} (see the discussion after Definition 4.10 of \cite{neri-powell:pp:martingale}).
\end{remark}

As in the deterministic case, uniform metastability is equivalent to finite almost sure convergence.

\begin{proposition}
\label{prop:metaequivconv}
Let $\seq{X_n}$ be a sequence of real-valued functions on $\Omega$. Then $\seq{X_n}$ is uniformly metastable if and only if it is finitely almost surely convergent.
\end{proposition}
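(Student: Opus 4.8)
The plan is to observe first that both ``uniformly metastable'' and ``finitely almost surely convergent'' presuppose the measurability condition \eqref{eq:measureability}, and that under this condition every set of the form
$\{\omega \in \Omega : \forall i,j \in [N;N+k]\,(|X_i(\omega) - X_j(\omega)| \le \varepsilon)\} = \bigcap_{i,j \in [N;N+k]} \{\omega : |X_i(\omega) - X_j(\omega)| \le \varepsilon\}$
is a finite intersection of members of $\mathcal{F}$, hence belongs to $\mathcal{F}$; so all the $\mu$-values appearing in the two definitions are well defined. It therefore suffices to show, for a sequence satisfying \eqref{eq:measureability}, that the purely quantitative clauses of the two definitions are equivalent. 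I would stress at this point that — in contrast with the equivalences $(A)$--$(D)$ from the introduction, which require continuity of measure — the argument below uses only finite additivity and monotonicity of $\mu$, which is precisely why it persists in the finitely additive setting.

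For the implication ``finitely almost surely convergent $\Rightarrow$ uniformly metastable'', fix $\varepsilon, \lambda > 0$ and $g : \NN \to \NN$. Finite almost sure convergence supplies an $N \in \NN$ with $\mu\big(\forall i,j \in [N;N+k]\,(|X_i - X_j| \le \varepsilon)\big) > 1 - \lambda$ for every $k \in \NN$; instantiating $k := g(N)$ and setting $n := N$ gives precisely the inequality demanded by uniform metastability. This direction is immediate.

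The reverse direction is the substantive one; I would argue it by contraposition, at the level of measures, mirroring the classical argument that metastability implies convergence. Suppose the finite-almost-sure-convergence clause fails, and fix $\varepsilon, \lambda > 0$ witnessing this: for every $N \in \NN$ there is a $k \in \NN$ with $\mu\big(\forall i,j \in [N;N+k]\,(|X_i - X_j| \le \varepsilon)\big) \le 1 - \lambda$. Since the sets $\{\omega : \forall i,j \in [N;N+k]\,(|X_i(\omega) - X_j(\omega)| \le \varepsilon)\}$ are nonincreasing in $k$, I would let $g(N)$ be the \emph{least} such $k$ — avoiding any appeal to choice — thereby obtaining a function $g : \NN \to \NN$ with $\mu\big(\forall i,j \in [n;n+g(n)]\,(|X_i - X_j| \le \varepsilon)\big) \le 1 - \lambda$ for every $n \in \NN$. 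Together with $\varepsilon$ and $\lambda$, this $g$ directly contradicts uniform metastability.

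I do not expect a genuine obstacle here: the content is essentially the bookkeeping of the quantifiers, plus the observation that all the sets involved stay inside the algebra $\mathcal{F}$ so that $\mu$ is defined on them. The one point worth emphasizing, rather than a difficulty, is that the proof is deliberately ``continuity-free'' — the only facts about $\mu$ used are finite additivity and monotonicity — so it transfers verbatim from the $\sigma$-additive to the finitely additive setting, unlike the passage between formulations $(A)$ and $(B)$, which genuinely requires continuity of measure and fails in general.
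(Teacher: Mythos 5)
Your proof is correct and follows essentially the same route as the paper's: the forward direction by instantiating $k := g(N)$, and the reverse direction by contraposition, taking $g(N)$ to be the least counterexample $k$. The extra remarks on measurability and the continuity-free nature of the argument are accurate but not needed beyond what the paper already assumes.
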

\begin{proof}
Suppose first that $\{X_n\}$ is finitely almost surely convergent.  
Then for every $\varepsilon, \lambda > 0$ there exists $N \in \mathbb{N}$ such that
\[
  \forall k \in \mathbb{N} \, \left(
  \mu\big(\forall i,j \in [N;N+k]\, (|X_i - X_j| \le \varepsilon)
  \big) > 1 - \lambda\right).
\]
Let $g \colon \mathbb{N} \to \mathbb{N}$ be arbitrary.  
Taking $k = g(N)$ in the above yields
\[
  \mu\big(\forall i,j \in [N;N+g(N)]\, |X_i - X_j| \le \varepsilon\big) > 1 - \lambda,
\]
which shows that $\seq{X_n}$ is uniformly metastable.

\medskip
Conversely, suppose $\seq{X_n}$ is not finitely almost surely convergent.  
Then there exist $\varepsilon, \lambda > 0$ such that for all $N \in \mathbb{N}$ there exists $k \in \mathbb{N}$ with
\[
\mu\big(\forall i,j \in [N;N+k]\, |X_i - X_j| \le \varepsilon\big) \le 1 - \lambda.
\]
Define $g(N)$ to be the least such $k$.  
Then for this choice of $g$, we have, for all $N\in \NN$
\[
  \mu\big( \forall i,j \in [N;N+g(N)]\, |X_i - X_j| \le \varepsilon\big) \le 1 - \lambda,
\]
which shows that $\seq{X_n}$ fails to be uniformly metastable.
\end{proof}

To prove Theorem \ref{thrm:main}, we shall demonstrate that the ergodic averages are uniformly metastable. Furthermore, we shall obtain a quantitative strengthening of this result through the construction of explicit so-called \emph{rates of uniform metastability}:

\begin{definition}
\label{def:unifmeta}
Let $\seq{X_n}$ be a sequence of real-valued functions on $\Omega$ satisfying
\[
\forall i,j \in \NN\, \forall \varepsilon>0\, \left(\{\omega \in \Omega: |X_i(\omega)-X_j(\omega)|\le \varepsilon\} \in \mathcal{F}\right).
\]
A functional $\Phi$ is called a \emph{rate of uniform metastability} for $\seq{X_n}$ if
\[
\forall \varepsilon,\lambda \in (0,1)\, \forall g:\NN\to \NN\, \exists n \le \Phi(\lambda,\varepsilon,g)\, \left(\mu\Big(\forall i,j \in [n,n+g(n)] \, (|X_i - X_j| \le \varepsilon) \Big) > 1 - \lambda\right).
\]
\end{definition}

In \cite{AVIGAD-GERHARDY-TOWSNER:10:Ergodic}, rates of uniform metastability were computed for the pointwise ergodic theorem for $L_2$ functions, with the treatment of the full theorem later given by the author and Powell in \cite{neri-powell:pp:martingale}. Moreover, in \cite{neri-powell:pp:martingale}, a stronger quantitative result is established by introducing the concept of \emph{uniform learnability}:

\begin{definition}
\label{def:learn}
Let $\seq{X_n}$ be a sequence of real-valued functions on $\Omega$ satisfying
\[
\forall i,j \in \NN\, \forall \varepsilon>0\,\left(\{\omega \in \Omega: |X_i(\omega)-X_j(\omega)|\le \varepsilon\} \in \mathcal{F}\right).
\]
A function $\phi:(0,1)\times (0,1)\to \RR^+$ is called a \emph{learnable rate of uniform convergence} if, for any sequences $a_0<b_0 \le a_1<b_1 \le \dots$, there exists $n \le \phi(\lambda,\varepsilon)$ such that
\[
\mu\Big(\exists i,j\in [a_n,b_n] \ |X_i-X_j|\geq \varepsilon \Big) \le \lambda.
\]
\end{definition}
It turns out that a learnable rate of uniform convergence corresponds to a rate of metastability of a particularly nice form:

\begin{proposition}[c.f. \cite{neri-powell:pp:martingale}]
\label{prop:metaequivlearn}
Let $\seq{X_n}$ be a sequence of real-valued functions on $\Omega$ satisfying
\[
\forall i,j \in \NN\, \forall \varepsilon>0\, \left(\{\omega \in \Omega: |X_i(\omega)-X_j(\omega)|\le \varepsilon\} \in \mathcal{F}\right).
\]
Then 
\[
\Phi(\lambda,\varepsilon,g) = \tilde g^{(\lceil \phi(\lambda,\varepsilon) \rceil)}(0) \quad \text{with } \tilde g(n) := n + g(n)
\]
is a rate of uniform metastability for $\seq{X_n}$ if and only if $\lceil \phi(\lambda,\varepsilon) \rceil$ is a learnable rate of uniform convergence.
\end{proposition}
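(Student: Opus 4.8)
The plan is to set up a dictionary between sampling functions $g\colon\NN\to\NN$, presented via their iteration sequence $0,\tilde g(0),\tilde g^{(2)}(0),\dots$, and admissible nested interval sequences $a_0<b_0\le a_1<b_1\le\cdots$ as in Definition \ref{def:learn}, and then to check that under this correspondence uniform metastability with a rate of the special iterated form is exactly learnability. This is the stochastic analogue of the elementary deterministic equivalence between metastability and learnable convergence, and runs parallel to \cite{neri-powell:pp:martingale}; finite additivity causes no difficulty, since throughout we only form finite unions and intersections of the sets $\{|X_i-X_j|\le\varepsilon\}\in\mathcal F$ and use only the identity $\mu(E)=1-\mu(\comp{E})$ for $E\in\mathcal F$.

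For the ``if'' direction, I would assume $\lceil\phi\rceil$ is a learnable rate of uniform convergence, fix $\varepsilon,\lambda\in(0,1)$ and $g\colon\NN\to\NN$, and write $m:=\lceil\phi(\lambda,\varepsilon)\rceil$ and $n_k:=\tilde g^{(k)}(0)$, noting $n_0\le n_1\le\cdots$ since $\tilde g(x)\ge x$. If $g(n_k)=0$ for some $k\le m$, then $[n_k;n_k+g(n_k)]=\{n_k\}$, the metastable event has measure $1$, and $n_k\le n_m=\Phi(\lambda,\varepsilon,g)$, so we are done. Otherwise the $n_k$ are strictly increasing, so $a_k:=n_k$, $b_k:=n_{k+1}$ ($k=0,\dots,m$) is an admissible input, and Definition \ref{def:learn} returns $n\le\phi(\lambda,\varepsilon)\le m$ with $\mu\big(\exists i,j\in[n_n;n_{n+1}]\,(|X_i-X_j|\ge\varepsilon)\big)\le\lambda$; since $[n_n;n_{n+1}]=[n_n;n_n+g(n_n)]$, taking complements gives $\mu\big(\forall i,j\in[n_n;n_n+g(n_n)]\,(|X_i-X_j|\le\varepsilon)\big)>1-\lambda$, with witness $n_n\le n_m=\Phi(\lambda,\varepsilon,g)$.

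For the ``only if'' direction, I would assume the displayed $\Phi$ is a rate of uniform metastability, fix $\varepsilon,\lambda\in(0,1)$ and an admissible sequence $a_0<b_0\le a_1<b_1\le\cdots$, set $m:=\lceil\phi(\lambda,\varepsilon)\rceil$, and define $g\colon\NN\to\NN$ by $g(n):=b_{k(n)}-n$, where $k(n):=\min\{k\le m:a_k\ge n\}$ when $n\le a_m$, and $g(n):=0$ otherwise (the case $m=0$ handled directly). A short induction on $j$, using only $a_{j-1}<b_{j-1}\le a_j$, gives $\tilde g^{(j)}(0)=b_{j-1}$ for $1\le j\le m+1$; in particular $\Phi(\lambda,\varepsilon,g)=b_{m-1}\le a_m$. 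Moreover, for every $n\le a_m$ one has $k(n)\le m$ and $[a_{k(n)};b_{k(n)}]\subseteq[n;n+g(n)]$, since $n+g(n)=b_{k(n)}$ and $n\le a_{k(n)}$. Feeding $g$ into uniform metastability produces $n\le\Phi(\lambda,\varepsilon,g)\le a_m$ with $\mu\big(\forall i,j\in[n;n+g(n)]\,(|X_i-X_j|\le\varepsilon)\big)>1-\lambda$; restricting to the sub-interval $[a_{k(n)};b_{k(n)}]$ and taking complements yields $\mu\big(\exists i,j\in[a_{k(n)};b_{k(n)}]\,(|X_i-X_j|\ge\varepsilon)\big)\le\lambda$ with $k(n)\le m=\lceil\phi(\lambda,\varepsilon)\rceil$, which is exactly the conclusion of Definition \ref{def:learn}.

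The crux is the construction of $g$ in the ``only if'' direction: it must be arranged so that whatever witness $n\le\Phi(\lambda,\varepsilon,g)$ the metastability hypothesis hands back — and this $n$ need not lie on the iteration path $0,b_0,b_1,\dots$ — the block $[n;n+g(n)]$ still engulfs some full interval $[a_k;b_k]$ with $k\le m$, while simultaneously the iterates $\tilde g^{(j)}(0)$ land exactly on $b_0,b_1,\dots$ so that $\Phi(\lambda,\varepsilon,g)=b_{m-1}$ and no loss is incurred in the rate. A minor, entirely routine point is reconciling the strict versus non-strict inequalities in the two definitions ($|X_i-X_j|\le\varepsilon$ versus $\ge\varepsilon$, and $>1-\lambda$ versus $\le\lambda$), which is handled exactly as in \cite{neri-powell:pp:martingale}; the degenerate cases $g(n_k)=0$ and $m=0$ are dispatched directly as above.
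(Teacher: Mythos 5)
Your proof is correct and is essentially a direct, self-contained instantiation of the abstract equivalence (Lemma 3.5 of the cited Neri--Powell paper) that the present paper invokes without proof: the two constructions --- reading the intervals $[a_k,b_k]$ off the iteration path $\tilde g^{(k)}(0)$ in one direction, and in the other building $g$ so that its iterates land exactly on $b_0,b_1,\dots$ while every block $[n;n+g(n)]$ with $n\le a_m$ engulfs some $[a_{k(n)};b_{k(n)}]$ with $k(n)\le m$ --- are precisely the intended dictionary, and your degenerate-case handling is sound. The only caveats are cosmetic and already present in the paper's own conventions (the $\ge\varepsilon$ versus $>\varepsilon$ and $\le\lambda$ versus $>1-\lambda$ mismatches between Definitions \ref{def:learn} and \ref{def:unifmeta}, which the paper's proof of Proposition \ref{prop:abs:fluc:to:learn} also glosses over), together with the trivial need to extend your finite list $a_0<b_0\le\dots\le a_m<b_m$ to an infinite admissible sequence in the ``if'' direction.
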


This result is stated in the context of $\sigma$-additive spaces, immediately following Definition 4.15 in \cite{neri-powell:pp:martingale}, as an easy consequence of an abstract result given in Lemma 3.5 of \cite{neri-powell:pp:martingale}. The finitely additive case follows similarly as a direct consequence of the same lemma.

We shall prove a quantitatively stronger result than that stated in Theorem \ref{thrm:main} by computing a learnable rate of uniform convergence for the ergodic averages. Theorem \ref{thrm:main} will then follow from Propositions \ref{prop:metaequivconv} and \ref{prop:metaequivlearn}.

To compute a learnable rate of uniform convergence for the pointwise ergodic theorem, we require a result from \cite{jones:1998:oscillation} concerning the fluctuations of ergodic averages on $\sigma$-additive probability spaces. First, we introduce the notion of the number of $\varepsilon$-fluctuations of a sequence of real numbers.

\begin{definition}
Let $\seq{x_n}$ be a sequence of real numbers. We define $\fluc{N}{\varepsilon}{x_n}$ to be the number of $\varepsilon$-fluctuations that occur in the initial segment $\{x_0,\ldots,x_{N-1}\}$, i.e., the maximal $k \in \NN$ such that there exist indices
\[
i_1 < j_1 \le i_2 < j_2 \le \dots \le i_k < j_k < N
\]
with 
\[
|x_{i_l}-x_{j_l}| \ge \varepsilon \quad \text{for all } l=1,\dots,k.
\]
We write
\[
\flucinf{\varepsilon}{x_n} = \lim_{N\to\infty} \fluc{N}{\varepsilon}{x_n}
\]
for the total number of $\varepsilon$-fluctuations that occur in $\seq{x_n}$. Note that $\flucinf{\varepsilon}{x_n}$ could be infinite.
\end{definition}

\begin{remark}
It is clear that $\seq{x_n}$ converges if and only if 
\[
\forall \varepsilon > 0 \, \left(\flucinf{\varepsilon}{x_n} < \infty\right).
\]
\end{remark}

In \cite{jones:1998:oscillation}, Jones, Kaufman, Rosenblatt, and Wierdl obtained the following quantitative strengthening of the pointwise ergodic theorem:

\begin{theorem}
\label{thrm:jonesivanov}
Let $(\Omega, \mathcal{F},\mu)$ be a $\sigma$-additive probability space, and let $\tau:\Omega \to \Omega$ be a measure-preserving automorphism. For any $f \in L_1$ and $k, \varepsilon > 0$, we have
\[
\mu\Big(\flucinf{\varepsilon}{A_nf} \ge k\Big) \le \frac{C \norm{f}_1}{\varepsilon \sqrt{k}},
\]
for some numerical constant $C$.
\end{theorem}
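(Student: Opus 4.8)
The quantity $\flucinf{\varepsilon}{A_n f}$ is exactly what \cite{jones:1998:oscillation} calls the number of $\varepsilon$-jumps of the sequence $\seq{A_n f}$, and the asserted inequality is a direct repackaging of the $\lambda$-jump inequality for ergodic averages established there (closely related upcrossing-type inequalities were also obtained by Ivanov). So the plan is, first and foremost, to invoke that result: Jones, Kaufman, Rosenblatt and Wierdl prove that there is a universal constant $C$, independent of the dynamical system, such that
\[
\alpha \cdot \mu\Big(\varepsilon\sqrt{\flucinf{\varepsilon}{A_n f}} > \alpha\Big) \le C\norm{f}_1 \qquad \text{for all } \alpha,\varepsilon>0,
\]
i.e.\ the function $\omega \mapsto \varepsilon\sqrt{\flucinf{\varepsilon}{A_n f}(\omega)}$ lies in weak-$L^1$ with norm at most $C\norm{f}_1$, uniformly in $\varepsilon$. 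Taking $\alpha := \varepsilon\sqrt{k}$ and using that $\{\varepsilon\sqrt{\flucinf{\varepsilon}{A_n f}} > \varepsilon\sqrt{k}\} = \{\flucinf{\varepsilon}{A_n f} > k\}$ gives $\mu(\flucinf{\varepsilon}{A_n f} > k) \le C\norm{f}_1/(\varepsilon\sqrt{k})$; replacing $k$ by $k-1$ and absorbing the resulting numerical factor into the constant (the case $k=1$ being trivial once $C \ge 1$) yields the stated bound with the event $\{\flucinf{\varepsilon}{A_n f} \ge k\}$.

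For a self-contained argument one would reprove this jump inequality in two stages, following \cite{jones:1998:oscillation}. First, by the Calder\'on transference principle --- here only the classical $\sigma$-additive form is needed, a finitely additive analogue being established later as Theorem \ref{thrm:trans} --- the uniform weak-$(1,1)$ bound for the jump functional of $\seq{A_n f}$ over an arbitrary dynamical system reduces to the same bound for the Hardy--Littlewood averages $A_n g(m) := \frac{1}{n}\sum_{i=1}^n g(m+i)$ acting on $g \in \ell^1(\ZZ)$. Second, one proves the jump inequality for these explicit convolution operators: split the jump count into jumps that straddle dyadic blocks of scales and jumps confined to a single block, control the former by an $L^2$ square-function / oscillation estimate (using that the Fourier multiplier of $A_n$ is well behaved, so the dyadic pieces are almost orthogonal) and the latter by the maximal ergodic inequality together with a short-variation estimate, and finally pass from the resulting $L^2$ bound to the weak-$(1,1)$ endpoint via a Calder\'on--Zygmund decomposition of $g$ into a good and a bad part.

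The main obstacle is precisely this last passage to the endpoint, together with the sharp exponent $1/2$ on $\flucinf{\varepsilon}{A_n f}$ (equivalently the $\sqrt{k}$, rather than $k$, in the denominator). The $L^2$ version of the jump inequality is comparatively soft --- essentially a square-function estimate --- but the $2$-variation of ergodic averages is genuinely unbounded on $L^2$, so the $1/2$-power jump functional is a true endpoint substitute and the weak-$(1,1)$ bound cannot be obtained by interpolation from a strong $L^2$ variational estimate. Making the Calder\'on--Zygmund argument work for the jump-counting functional --- which is neither linear nor sublinear in $g$ and interacts awkwardly with the good/bad decomposition --- is the delicate technical core of \cite{jones:1998:oscillation}; accordingly, the present paper simply cites their theorem at this point.
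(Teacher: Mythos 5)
Your proposal is correct and takes the same route as the paper: Theorem \ref{thrm:jonesivanov} is stated there purely as a quotation of the $\lambda$-jump inequality of Jones, Kaufman, Rosenblatt and Wierdl \cite{jones:1998:oscillation}, with no proof given, and your derivation of the stated form from their uniform weak-$L^1$ bound on $\varepsilon\sqrt{\flucinf{\varepsilon}{A_nf}}$ (choosing $\alpha=\varepsilon\sqrt{k}$ and absorbing the $\ge k$ versus $>k$ adjustment into the constant) is sound. Your sketch of how one would reprove the jump inequality from scratch is accurate as background but is not required for this statement.
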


This result was originally conjectured by Ivanov \cite{kachurovskii:96:convergence} (see also \cite{neri-powell:pp:martingale}\footnote{The authors were unaware of the bound of Jones, Kaufman, Rosenblatt, and Wierdl, and by improving the methods of Ivanov, were able to give an improvement to his bound, which was still weaker than that in \cite{jones:1998:oscillation}.}), who, using entirely different methods, was only able to show
\[
\mu\Big(\flucinf{\varepsilon}{A_nf} \ge k\Big) \le C \sqrt{\frac{\log(k)}{k}},
\]
where $C$ is a constant depending only on $\norm{f}_1/\varepsilon$.

By establishing a version of the Calder\'on transference principle \cite{calderon1968ergodic} for finitely additive spaces, we shall show that a version of Theorem \ref{thrm:jonesivanov} also holds in the finitely additive setting.

It is clear from Theorem \ref{thrm:jonesivanov} that the ergodic averages satisfy
\[
\forall \varepsilon > 0 \, \left(\flucinf{\varepsilon}{A_nf} < \infty \quad \text{almost surely}\right),
\]
and thus converge almost surely. However, as in the case of convergence, bounded fluctuations can have different, nonequivalent formulations in the finitely additive setting. If $\mathcal{F}$ is a $\sigma$-algebra and $\seq{X_n}$ is a sequence of random variables on $\Omega$ (so that for all $k$ and $\varepsilon$, $\fluc{k}{\varepsilon}{X_n}$ is itself a random variable), we have the following formulations of having finitely many fluctuations almost surely:
\begin{enumerate}
    \item[$(J1)$] 
    \(
    \mu\Big(\{\omega \in \Omega : \forall \varepsilon > 0\, \exists N \in \NN\, \forall k\in \NN\, \fluc{k}{\varepsilon}{X_n}(\omega) < N\}\Big) = 1.
    \)\medskip

    \item[$(J2)$] 
    \(
    \forall \varepsilon > 0\, \left(\mu\Big(\{\omega \in \Omega : \exists N\in \NN\, \forall k \in \NN\, (\fluc{k}{\varepsilon}{X_n}(\omega) < N)\}\Big) = 1\right).
    \)\medskip

    \item[$(J3)$] 
    \(
    \forall \varepsilon, \lambda > 0\, \exists N\in \NN\, \left(\mu\Big(\{\omega \in \Omega : \forall k\in \NN\, (\fluc{k}{\varepsilon}{X_n}(\omega) < N)\}\Big) > 1 - \lambda\right).
    \)\medskip

    \item[$(J4)$] 
    \(
    \forall \varepsilon, \lambda > 0\, \exists N\in \NN\, \forall k\in \NN\, \left(\mu\Big(\{\omega \in \Omega : \fluc{k}{\varepsilon}{X_n}(\omega) < N\}\Big) > 1 - \lambda\right).
    \)
\end{enumerate}

As in the case of convergence, we have
\[
(J1) \Rightarrow (J2), \qquad (J3) \Rightarrow (J2), \qquad \text{and} \qquad (J3) \Rightarrow (J4).
\]

Furthermore, it is clear that 
\[
(J2) \iff (B),
\]
where $(B)$ is the formulation of almost sure convergence given by:
\[
\forall \varepsilon > 0\, \left(
    \mu\big(\{\omega \in \Omega : \exists N \in \NN\, \forall i,j \ge N\,
    (|X_i(\omega) - X_j(\omega)| \le \varepsilon) \}\big) = 1\right).
\]
Therefore, Example \ref{ex:density} demonstrates that the pointwise ergodic theorem cannot satisfy $(J2)$ in the finitely additive setting and thus cannot satisfy $(J3)$. However, Theorem \ref{thrm:jonesivanov} implies formulation $(J3)$, so it cannot hold in the finitely additive setting. We shall show that the $(J4)$ formulation of Theorem \ref{thrm:jonesivanov} does hold via the Calder\'on transference principle \cite{calderon1968ergodic} for finitely additive spaces.

Theorem \ref{thrm:main} will follow from our $(J4)$ formulation of Theorem \ref{thrm:jonesivanov} by establishing a quantitative connection between bounds on the fluctuations and learnable rates of uniform convergence. We first introduce the notion of a \emph{modulus of finite fluctuations}, as in \cite{neri-powell:pp:martingale}, which quantitatively captures the $(J4)$ formulation:

\begin{definition}
\label{def:modfinfluc}
Let $\seq{X_n}$ be a sequence of real-valued functions on $\Omega$ satisfying
\[
\forall i,j \in \NN\, \forall \varepsilon>0\, \, \left(\{\omega \in \Omega: |X_i(\omega)-X_j(\omega)|\le \varepsilon\} \in \mathcal{F}\right).
\]
A function $\phi:(0,1)\times (0,1)\to \RR^+$ is a \emph{modulus of finite fluctuations} for $\seq{X_n}$ if
\[
\forall \varepsilon, \lambda \in (0,1)\, \forall k\in \NN \, \left(\mu\Big(\fluc{k}{\varepsilon}{X_n} < \phi(\varepsilon,\lambda)\Big) > 1 - \lambda\right).
\]
\end{definition}

\begin{remark}
This definition is well-posed because, if 
\[
\forall i,j \in \NN\, \forall \varepsilon>0\, \left( \{\omega \in \Omega: |X_i(\omega)-X_j(\omega)|\le \varepsilon\} \in \mathcal{F}\right).
\]
then
\[
\forall k \in \NN\, \forall \varepsilon, N > 0\, \left(\{\omega \in \Omega : \fluc{k}{\varepsilon}{X_n}(\omega) < N\} \in \mathcal{F}\right).
\]
\end{remark}

In \cite{neri-powell:pp:martingale}, the author and Powell examined how quantitative notions of almost sure convergence, including those introduced here, relate to one another. For example, from a learnable rate of uniform convergence, one can obtain a rate of uniform metastability. Furthermore, the authors abstracted away from almost sure convergence and instead focused on general logical formulas of the same quantifier complexity, yielding more general quantitative results.

At the time of writing \cite{neri-powell:pp:martingale} the quantitative relationship between moduli of finite fluctuations and learnable rates of uniform convergence in the $\sigma$-additive setting, was left as an open problem. Recently, Powell communicated to the author a partial solution to this problem by constructing a learnable rate of uniform convergence from a modulus of finite fluctuations. Powell's construction also lifts naturally to the finitely additive setting. Namely, one has the following result:

\begin{proposition}
\label{prop:abs:fluc:to:learn}
Let $\seq{X_n}$ be a sequence of real-valued functions on $\Omega$ satisfying
\[
\forall i,j \in \NN\, \forall \varepsilon>0\, \{\omega \in \Omega: |X_i(\omega)-X_j(\omega)|\le \varepsilon\} \in \mathcal{F}.
\]
Suppose $\seq{X_n}$ has a modulus of finite fluctuations $\phi$. Then $\seq{X_n}$ has a learnable rate of uniform convergence given by
\[
\psi(\lambda,\varepsilon) := \frac{2 \, \phi(\lambda/2,\varepsilon)}{\lambda}.
\]
\end{proposition}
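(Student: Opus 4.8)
The plan is a pigeonhole argument on fluctuations, executed so that only \emph{finitely many} sets are ever combined. Fix $\varepsilon,\lambda\in(0,1)$ and a chain $a_0<b_0\le a_1<b_1\le\cdots$. Let $N$ be the value supplied by the modulus of finite fluctuations $\phi$ at the parameters $\varepsilon$ and $\lambda/2$, so that $\mu\big(\fluc{k}{\varepsilon}{X_n}\ge N\big)\le \lambda/2$ for \emph{every} $k\in\NN$, and put $K:=\lceil 2N/\lambda\rceil$, noting $\psi(\lambda,\varepsilon)=2N/\lambda$. For $n<K$ set $B_n:=\{\omega\in\Omega:\exists i,j\in[a_n;b_n]\,(|X_i(\omega)-X_j(\omega)|\ge\varepsilon)\}$; by the weak measurability hypothesis (same reasoning as in the remark following Definition \ref{def:modfinfluc}) each $B_n$ lies in $\mathcal{F}$, as do the superlevel sets of the multiplicity function $Y:=\sum_{n=0}^{K-1} I_{B_n}$. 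It suffices to produce some $n\le K-1<2N/\lambda=\psi(\lambda,\varepsilon)$ with $\mu(B_n)\le\lambda$.

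The combinatorial heart is a chaining observation: if $n_1<\cdots<n_m$ lie in $\{0,\dots,K-1\}$ and $\omega\in B_{n_1}\cap\cdots\cap B_{n_m}$, then $\fluc{b_{K-1}+1}{\varepsilon}{X_n}(\omega)\ge m$. Indeed, choose $i_\ell<j_\ell$ in $[a_{n_\ell};b_{n_\ell}]$ with $|X_{i_\ell}(\omega)-X_{j_\ell}(\omega)|\ge\varepsilon$ (the two indices are distinct since the difference is nonzero); since the blocks are ordered, $j_\ell\le b_{n_\ell}\le a_{n_\ell+1}\le a_{n_{\ell+1}}\le i_{\ell+1}$, so $i_1<j_1\le i_2<j_2\le\cdots\le i_m<j_m\le b_{n_m}\le b_{K-1}$, exhibiting $m$ $\varepsilon$-fluctuations inside $\{X_0(\omega),\dots,X_{b_{K-1}}(\omega)\}$. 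Hence $\{Y\ge m\}\subseteq\{\fluc{b_{K-1}+1}{\varepsilon}{X_n}\ge m\}$ for every $m$; applying the modulus of finite fluctuations at the single index $k:=b_{K-1}+1$ gives $\mu(Y\ge m)\le\lambda/2$ for all integers $m\ge N$.

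Now combine this with a Markov-type count. As $Y$ is simple with values in $\{0,\dots,K\}$, linearity of the integral yields $\sum_{n=0}^{K-1}\mu(B_n)=\mu(Y)=\sum_{m=1}^{K}\mu(Y\ge m)$. Bounding the at most $\lceil N\rceil-1<N$ terms with $m<N$ by $1$ and all remaining terms by $\lambda/2$ gives $\sum_{n=0}^{K-1}\mu(B_n)<N+K\lambda/2$. Were $\mu(B_n)>\lambda$ for every $n\le K-1$, this would force $K\lambda<N+K\lambda/2$, i.e.\ $K<2N/\lambda$, contradicting $K=\lceil 2N/\lambda\rceil$. Therefore some $n\le K-1<2N/\lambda=\psi(\lambda,\varepsilon)$ has $\mu(B_n)\le\lambda$, which is precisely the defining property of a learnable rate of uniform convergence.

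The only genuinely delicate point — and the reason the argument belongs in a paper about the finitely additive setting — is the discipline of never passing to an infinite union or intersection. One is tempted to work directly with the event $\{\forall k\,(\fluc{k}{\varepsilon}{X_n}<N)\}$, but this is a countable intersection, which need neither lie in $\mathcal{F}$ nor carry the expected measure when $\mu$ is merely finitely additive. The fix is to fix the threshold index $k=b_{K-1}+1$ \emph{in advance}, which is legitimate exactly because only the finitely many blocks $B_0,\dots,B_{K-1}$ are ever in play; the rest is routine, modulo the measurability bookkeeping already guaranteed by the hypotheses.
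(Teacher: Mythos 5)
Your proof is correct and follows essentially the same route as the paper's: a pigeonhole count of fluctuations across the finitely many blocks, controlled by the modulus of finite fluctuations evaluated at a single threshold index fixed in advance ($k=b_{K-1}+1$). The only difference is bookkeeping — you sum $\mu(Y\ge m)$ over levels of the simple function $Y=\sum I_{B_n}$, whereas the paper intersects each block event with the complement of the bad-fluctuation set and integrates the pointwise bound — and both yield the same constant.
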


\begin{proof}
Fix $\lambda, \varepsilon \in (0,1]$ and consider sequences $a_0 < b_0 \le a_1 < b_1 \le \dots$. For $k \in \NN$, define the event
\[
B_k := \{\omega \in \Omega : \fluc{k}{\varepsilon}{X_n}(\omega) \ge \phi(\lambda/2,\varepsilon)\} \in \mathcal{F}.
\]
By the definition of the modulus of finite fluctuations, $\mu(B_k) \le \lambda/2$ for all $k \in \NN$. For $a,b \in \NN$, set
\[
C(a,b) := \{\omega \in \Omega : \exists i,j \in [a,b], \ |X_i(\omega)-X_j(\omega)| > \varepsilon\} \in \mathcal{F}.
\]

Suppose, for a contradiction, that for all $n \le \psi(\lambda,\varepsilon)$ we have $\mu(C(a_n,b_n)) > \lambda$. Then, for all $n \le \psi(\lambda,\varepsilon)$,
\[
\mu(C(a_n,b_n) \cap B_k^c) = \mu(C(a_n,b_n)) - \mu(C(a_n,b_n) \cap B_k) > \lambda - \mu(B_k) \ge \frac{\lambda}{2},
\]
where we set $k := b_{\phi(\lambda/2,\varepsilon)} + 1$.  

Now observe that
\[
\sum_{n \le \psi(\lambda,\varepsilon)} I_{C(a_n,b_n) \cap B_k^c} < \phi(\lambda/2,\varepsilon),
\]
and thus
\[
\frac{(\psi(\lambda,\varepsilon) + 1)\lambda}{2} \le \sum_{n \le \psi(\lambda)} \mu(C(a_n,b_n) \cap B_k^c) = \mu\Bigg(\sum_{n \le \psi(\lambda)} I_{C(a_n,b_n) \cap B_k^c}\Bigg) < \phi(\lambda/2,\varepsilon),
\]
which is a contradiction. Here, the last inequality follows from the monotonicity of the integral.
\end{proof}

\begin{remark}
The optimality of this construction follows from Example 4.18 in \cite{neri-powell:pp:martingale}. The converse relationship (constructing a modulus of finite fluctuations from a learnable rate of uniform convergence) is currently an open problem.
\end{remark}

\section{Proof of main results}
\label{sec:main}
The proof of the main result follows by establishing a variation of the Calder\'on transference principle \cite{calderon1968ergodic} for finitely additive spaces.

We adopt the same set up for transference as in \cite{kosz2024sharp}. Suppose that for each $N \in \NN$ we have mappings $\mathcal{O}_N:\RR^N \to \RR^+$ and $C:\RR^+ \to \RR^+$ such that, for any $K \in \NN$, $f:[1,2K] \to \RR$, and $a>0$, we have
\[
\tag{$\dagger$}
|\{k \in [1;2K] : \mathcal{O}_K(\seq{\Tilde{A}_n f(k)}_{n \in [1,K]}) \ge a\}| \le C(a) \sum_{i=1}^{2K} |f(i)|,
\]
where 
\[
\Tilde{A}_n f(k) := \frac{1}{n} \sum_{i=1}^n f(k+i \bmod 2K).
\]
Our goal is to obtain such an inequality for an arbitrary finitely additive dynamical system.

\begin{theorem}
\label{thrm:trans}
Let $(\Omega, \mathcal{F}, \mu)$ be a finitely additive probability space, $\tau: \Omega \to \Omega$ a measure-preserving automorphism, and $f \in L^1(\Omega, \mathcal{F}, \mu)$. If the averages 
\[
A_n f(\omega) := \frac{1}{n} \sum_{i=1}^n f(\tau^i \omega)
\]
satisfy
\[
\tag{$\star$}
\forall K \in \NN\, \forall a>0\, \left(\{\omega \in \Omega : \mathcal{O}_K(\seq{A_n f(\omega)}_{n \in [1,K]}) \ge a\} \in \mathcal{F}\right),
\]
then for all $a>0$ and $K \in \NN$,
\[
\mu(\mathcal{O}_K(\seq{A_n f}_{n \in [1,K]}) \ge a) \le C(a) \|f\|_1.
\]
\end{theorem}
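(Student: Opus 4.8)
The plan is to imitate the classical Calderón transference argument, but to replace the $\sigma$-additive ingredients (integration against a countably additive measure, Fubini, etc.) by the finitely additive integration theory recalled in Section \ref{subsec:int}, and in particular by the monotonicity and linearity of the finitely additive integral $\mu(\cdot)$ on simple and integrable functions. Fix $a>0$ and $K\in\NN$. First I would handle the case where $f$ is \emph{simple}, say $f=\sum_{\ell=1}^m c_\ell I_{B_\ell}$ with $\{B_\ell\}$ a finite $\mathcal F$-partition of $\Omega$. For such $f$ the ergodic averages $A_nf$ are themselves simple (finite linear combinations of indicators $I_{\tau^{-i}(B_\ell)}\in\mathcal F$), so $\mathcal O_K(\langle A_nf(\omega)\rangle_{n\in[1,K]})$ is simple and $(\star)$ is automatic; one wants to integrate the pointwise discrete estimate $(\dagger)$ over $\omega$. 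The key device is: for a large integer $L$ (to be sent to infinity) and a fixed $\omega$, apply $(\dagger)$ with $2K'=2(L+K)$ or similar and with the function $k\mapsto f(\tau^k\omega)$ on a window of length $\sim L$; this says that the number of $k$ in a window for which the \emph{truncated} average $\Tilde A_nf$ (with the wrap-around modulus) is $\ge a$ is at most $C(a)\sum|f(\tau^i\omega)|$. For $k$ in the ``interior'' of the window (at distance $\ge K$ from the boundary) the wrap-around does not affect the first $K$ averages, so $\Tilde A_nf(k)=A_nf(\tau^k\omega)$ there, and hence the indicator of $\{\mathcal O_K(\langle A_nf\rangle)\ge a\}$ evaluated along the orbit is controlled.

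Next I would integrate this orbit-wise inequality in $\omega$. Concretely, set $E:=\{\omega:\mathcal O_K(\langle A_nf(\omega)\rangle_{n\in[1,K]})\ge a\}\in\mathcal F$ and note $I_E\circ\tau^k = I_{\tau^{-k}(E)}$, with $\mu(\tau^{-k}(E))=\mu(E)$ by measure-preservation. Summing $\sum_{k=1}^{L} I_{\tau^{-k}(E)}(\omega)$ and using the interior-window estimate above gives, pointwise in $\omega$,
\[
\sum_{k=K+1}^{L-K} I_{\tau^{-k}(E)}(\omega) \le C(a)\sum_{i=1}^{L+K}|f(\tau^i\omega)| + (\text{boundary terms of size }O(K)),
\]
where the $O(K)$ accounts for the $k$'s near the two ends of the window where $\Tilde A_n\ne A_n\circ\tau^k$. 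Now apply the finitely additive integral $\mu(\cdot)$ to both sides: the left side becomes $(L-2K)\,\mu(E)$ by linearity and $\tau$-invariance of $\mu$, the first term on the right becomes $C(a)(L+K)\,\|f\|_1$ using $\mu(|f|\circ\tau^i)=\mu(|f|)=\|f\|_1$, and the boundary term is at most $O(K)$. Dividing by $L$ and letting $L\to\infty$ yields $\mu(E)\le C(a)\|f\|_1$, which is the claim for simple $f$.

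Finally I would pass from simple $f$ to general $f\in L^1$ by a determining sequence $\{f_r\}$ of simple functions with $\mu(|f_r-f_s|)\to 0$ and $f_r\to f$ hazily, so $\|f_r\|_1\to\|f\|_1$. The remaining issue is continuity of the functional $\omega\mapsto\mathcal O_K(\langle A_nf(\omega)\rangle_{n\le K})$ in $f$: one needs that $\|f_r-f\|_1$ small forces $\mu^*(\{|A_nf_r-A_nf|>\eta\})$ small for each $n\le K$ (immediate from $\|A_n(f_r-f)\|_1\le\|f_r-f\|_1$ and Markov's inequality in the finitely additive setting), and then that $\mathcal O_K$ is, say, continuous or at least that the sublevel/superlevel sets behave well under small perturbations — here one uses $(\star)$ together with a standard ``$a$ versus $a\pm\eta$'' comparison plus monotonicity of $\mu^*$, letting $\eta\to0$. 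The main obstacle, and the place requiring genuine care, is exactly this last limiting step: in the finitely additive world one only has outer-measure estimates and hazy convergence rather than almost-everywhere convergence, so one cannot invoke dominated convergence, and the argument must be arranged so that only monotonicity of $\mu$/$\mu^*$, linearity on integrable functions, and the triangle inequality for $\|\cdot\|_1$ are used; getting $(\dagger)$ applied on an arbitrarily long window to interact cleanly with the fixed window $[1,K]$ (controlling the boundary error uniformly in $L$) is the other delicate bookkeeping point.
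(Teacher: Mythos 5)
Your overall strategy (transfer the discrete inequality along an orbit and then integrate, using only linearity, monotonicity and $\tau$-invariance of the finitely additive integral) is the right one, and the integration step itself is sound. However, two of your steps do not go through as described. First, the growing-window form of the transference argument is not licensed by the hypothesis as stated: $(\dagger)$ couples the window length $2K$ to the index of the operator $\mathcal{O}_K$ and to the range $n\in[1,K]$ of the averages, so applying it on a window of length $2(L+K)$ only controls $|\{k : \mathcal{O}_{L+K}(\seq{\Tilde{A}_n f(k)}_{n\in[1,L+K]})\ge a\}|$, which says nothing about the fixed $\mathcal{O}_K$ unless one additionally assumes monotonicity of $\mathcal{O}_N$ in $N$ (true for the fluctuation counts used later, but not part of the abstract setup). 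The fix is to keep the window at exactly $[1,2K]$: with $f_\omega(k):=f(\tau^k\omega)$ on $[1,2K]$ one has $\Tilde{A}_n f_\omega(k)=A_nf(\tau^k\omega)$ for $k,n\in[1,K]$, and since $\mu(E_k)=\mu(E_0)$ \emph{exactly} for every shift (where $E_k:=(\tau^k)^{-1}(E_0)$ and $E_0$ is the superlevel set), averaging over the finitely many shifts $k\in[1,2K]$ gives $\mu(E_0)=\tfrac{1}{2K}\mu\bigl(\sum_k I_{E_k}\bigr)\le \tfrac{C(a)}{2K}\sum_k\mu(|f\circ\tau^k|)=C(a)\|f\|_1$. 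No $L\to\infty$ limit and no interior/boundary bookkeeping is needed; this is exactly the paper's argument.

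Second, the reduction to simple $f$ followed by a limiting argument is both unnecessary and, as sketched, unrecoverable: $\mathcal{O}_K$ is an arbitrary map $\RR^K\to\RR^+$ with no continuity assumed, so hazy convergence of $A_nf_r$ to $A_nf$ gives no control on the superlevel sets $\{\mathcal{O}_K(\seq{A_nf_r(\omega)})\ge a\}$ versus $\{\mathcal{O}_K(\seq{A_nf(\omega)})\ge a\}$, and your ``$a$ versus $a\pm\eta$'' comparison has nothing to bite on. This is precisely what hypothesis $(\star)$ is for: it supplies the measurability of $E_0$ for the actual $f\in L^1$, so that $\sum_k I_{E_k}$ is a simple function and the pointwise bound $\sum_k I_{E_k}(\omega)\le C(a)\sum_{k=1}^{2K}|f(\tau^k\omega)|$ can be integrated directly by monotonicity for general integrable $f$. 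You correctly identified the limiting step as the main obstacle; the resolution is that it should not appear at all.
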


\begin{proof}
Fix $a>0$ and $K \in \NN$. For each $\omega \in \Omega$, define $f_\omega : [1,2K] \to \RR$ by
\[
f_\omega(k) := f(\tau^k(\omega)).
\]
Then, for $k,n \in [1,K]$,
\[
\Tilde{A}_n f_\omega(k) = \frac{1}{n} \sum_{i=1}^n f_\omega(k+i) = \frac{1}{n} \sum_{i=1}^n f(\tau^i(\tau^k(\omega))) = A_n f(\tau^k(\omega)).
\]
For $k \in [1,K]$, define
\[
E_k := \{\omega \in \Omega : \mathcal{O}_K(\seq{A_n f(\tau^k(\omega))}_{n \in [1,K]}) \ge a\} = \{\omega \in \Omega : \mathcal{O}_K(\seq{\Tilde{A}_n f_\omega(k)}_{n \in [1,K]}) \ge a\}.
\]
By $(\star)$, $E_0 \in \mathcal{F}$. Moreover, $\mu(E_k) = \mu(E_0)$ for all $k \in [1,K]$ since $\tau$ is measure-preserving and invertible and $E_k = (\tau^k)^{-1}(E_0)$.
Hence,
\[
\mu(E_0) = \frac{1}{2K} \sum_{k=1}^{2K} \mu(E_k) = \frac{1}{2K} \mu\Big(\sum_{k=1}^{2K} I_{E_k}\Big).
\]
For each $\omega \in \Omega$,
\[
\sum_{k=1}^{2K} I_{E_k}(\omega) = |\{k \in [1,2K] : \mathcal{O}_K(\seq{\Tilde{A}_n f_\omega(k)}_{n \in [1,K]}) \ge a\}| \le C(a) \sum_{k=1}^{2K} |f_\omega(k)| = C(a) \sum_{k=1}^{2K} |f(\tau^k(\omega))|.
\]
By the monotonicity of the integral, we conclude
\[
\mu(E_0) \le \frac{C(a)}{2K} \sum_{k=1}^{2K} \mu(|f(\tau^k)|) = C(a) \|f\|_1,
\]
and the result follows.
\end{proof}
We can now establish a version of Theorem \ref{thrm:jonesivanov} in the finitely additive setting.
\begin{theorem}
\label{thrm:finjones}
There exists a constant $C$ such that for any finitely additive probability space $(\Omega, \mathcal{F}, \mu)$, measure-preserving automorphism $\tau: \Omega \to \Omega$, and $f \in L^1(\Omega, \mathcal{F}, \mu)$, if
\[
\forall i,j \in \NN\, \forall \varepsilon>0\, \{\omega \in \Omega: |A_if(\omega)-A_jf(\omega)|\le \varepsilon\} \in \mathcal{F},
\]
then for all $a, \varepsilon > 0$ and $K \in \NN$,
\[
\mu(\fluc{K}{\varepsilon}{A_n f} \ge a) \le \frac{C \|f\|_1}{\varepsilon \sqrt{a}}.
\]
\end{theorem}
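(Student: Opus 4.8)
The plan is to deduce the bound from the finitely additive Calder\'on transference principle, Theorem~\ref{thrm:trans}, applied with a carefully chosen oscillation operator. Fix $\varepsilon>0$ once and for all, let $C_0$ be the numerical constant furnished by Theorem~\ref{thrm:jonesivanov}, and take $\mathcal{O}_K(x_1,\dots,x_K) := \fluc{K}{\varepsilon}{x_n}$ (the number of $\varepsilon$-fluctuations in the finite segment $x_1,\dots,x_K$) together with $C(a) := C_0/(\varepsilon\sqrt{a})$. Since $\mathcal{O}_K$ is a genuine function of only the first $K$ coordinates, this is an admissible choice in the sense of Theorem~\ref{thrm:trans}, and $C$ is permitted to depend on the now-fixed parameter $\varepsilon$. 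Feeding $\seq{A_nf(\omega)}_{n\in[1;K]}$ into $\mathcal{O}_K$ returns exactly $\fluc{K}{\varepsilon}{A_nf(\omega)}$, so once the two hypotheses of Theorem~\ref{thrm:trans}, namely the discrete inequality $(\dagger)$ and the measurability condition $(\star)$, are verified, its conclusion is precisely the claimed inequality with $C=C_0$.

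To establish $(\dagger)$, I would apply the $\sigma$-additive fluctuation estimate (Theorem~\ref{thrm:jonesivanov}) to the finite dynamical system $\ZZ/2K\ZZ$ with normalized counting measure and the shift $\sigma(k):=k+1 \bmod 2K$, which is a measure-preserving automorphism of a finite, hence $\sigma$-additive, probability space. The key, if elementary, point is a \emph{freezing} phenomenon: for $k,n\in[1;K]$ we have $k+i\le 2K$ for all $i\le n$, so no wraparound occurs and the genuine ergodic averages of this system coincide on this range with $\Tilde{A}_n f(k)=\frac{1}{n}\sum_{i=1}^{n} f(k+i \bmod 2K)$. Theorem~\ref{thrm:jonesivanov} therefore yields
\[
\frac{1}{2K}\bigl|\{k\in[1;2K]:\flucinf{\varepsilon}{\Tilde{A}_nf(k)}\ge a\}\bigr|\le \frac{C_0}{\varepsilon\sqrt{a}}\cdot\frac{1}{2K}\sum_{i=1}^{2K}|f(i)|,
\]
and multiplying by $2K$ and using the trivial domination $\fluc{K}{\varepsilon}{\Tilde{A}_nf(k)}\le\flucinf{\varepsilon}{\Tilde{A}_nf(k)}$ of the partial fluctuation count by the total one gives exactly $(\dagger)$ with $C(a)=C_0/(\varepsilon\sqrt{a})$. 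Crucially, $C_0$ does not depend on $K$, which is what lets the estimate survive transference uniformly in $K$.

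For $(\star)$, I would note that for fixed $K$ and $a>0$, writing $m:=\ceil{a}$, the event $\{\omega:\fluc{K}{\varepsilon}{A_nf(\omega)}\ge a\}=\{\omega:\fluc{K}{\varepsilon}{A_nf(\omega)}\ge m\}$ is, after unwinding the definition of the fluctuation count, a finite union over index tuples $i_1<j_1\le\dots\le i_m<j_m\le K$ of finite intersections of the sets $\{\omega:|A_{i_l}f(\omega)-A_{j_l}f(\omega)|\ge\varepsilon\}$; by the hypothesis of the theorem (together with the observation in the remark following Definition~\ref{def:modfinfluc}) each such set lies in $\mathcal{F}$, hence so does the whole event. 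With $(\dagger)$ and $(\star)$ in place, Theorem~\ref{thrm:trans} gives $\mu(\fluc{K}{\varepsilon}{A_nf}\ge a)\le C_0\norm{f}_1/(\varepsilon\sqrt{a})$ for all $K\in\NN$ and $a>0$, as required.

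I expect the main obstacle to be the bookkeeping around the cyclic embedding in the second step: one must confirm that $\mathcal{O}_K$ is a function of only the first $K$ ergodic averages and that, over the index range $k,n\in[1;K]$, the ergodic averages of $\ZZ/2K\ZZ$ literally coincide with the $\Tilde{A}_nf$ appearing in $(\dagger)$, so that the $\sigma$-additive bound transfers with a constant that does not degrade as $K\to\infty$. The measurability step is routine given the standing hypothesis, and the passage from Theorem~\ref{thrm:jonesivanov} to $(\dagger)$ is immediate once the freezing observation is isolated.
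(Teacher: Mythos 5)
Your proposal is correct and follows essentially the same route as the paper: verify $(\star)$ from the measurability hypothesis, obtain $(\dagger)$ by applying Theorem~\ref{thrm:jonesivanov} to the cyclic system $([1;2K],\mathcal{P}([1;2K]),|\cdot|/2K)$ with the shift, and conclude via Theorem~\ref{thrm:trans}. The only (cosmetic) difference is your choice $\mathcal{O}_K=\fluc{K}{\varepsilon}{\cdot}$ with $C(a)=C_0/(\varepsilon\sqrt{a})$ versus the paper's $\mathcal{O}_K=\varepsilon\sqrt{\fluc{K}{\varepsilon}{\cdot}}$; both are admissible since $\varepsilon$ is fixed throughout.
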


\begin{proof}
Define $\mathcal{O}_K(\seq{x_n}) := \varepsilon \sqrt{\fluc{K}{\varepsilon}{x_n}}$. Condition $(\star)$ is satisfied since the set in question is a finite union of finite intersections of sets of the form $\{\omega \in \Omega : |A_i f(\omega) - A_j f(\omega)| \le \varepsilon\}$.

Applying Theorem \ref{thrm:jonesivanov} to the discrete space $([1,2K], \mathcal{P}([1,2K]), \mu_K)$ with $\mu_K(A) := |A|/2K$ and $\tau_K(\omega) = \omega + 1 \bmod 2K$ gives $(\dagger)$. The result then follows by Theorem \ref{thrm:trans}.
\end{proof}
\begin{remark}
    As is the case for Calder\'on transference principle is the $\sigma$-additive setting, our Theorem \ref{thrm:trans} allows us to extend several weak-type~$(1,1)$ inequalities on the ergodic avergaes (in the context of further measureability assumptions in the case where $\mathcal{F}$ is only assumed to be an algebra). This includes: finitely additve analogues of the maximal ergodic theorem, upcrossing inequalities (see, for example, \cite{ivanov1996oscillations,bish67foundations}), and inequalities on the variation and oscillation seminorms (see, for example, \cite{jones:1998:oscillation}) to the finitely additive setting. 
\end{remark}

\begin{proof}[Proof of Theorems \ref{thrm:main} and \ref{thrm:mainmeta}]
Theorem \ref{thrm:finjones} implies that 
\[
\phi(\lambda, \varepsilon) := \left( \frac{C \|f\|_1}{\varepsilon \lambda} \right)^2
\]
is a modulus of finite fluctuations (c.f.\ Definition \ref{def:modfinfluc}) for the sequence $\seq{A_n f}$. Moreover, by Proposition \ref{prop:abs:fluc:to:learn}, we obtain that 
\[
\delta(\lambda, \varepsilon) := \frac{8}{\lambda} \left( \frac{C \|f\|_1}{\varepsilon \lambda} \right)^2
\]
constitutes a learnable rate of uniform convergence (c.f.\ Definition \ref{def:learn}) for $\seq{A_n f}$. Consequently, Proposition \ref{prop:metaequivlearn} yields that 
\[
\Phi(\lambda, \varepsilon, g) := \tilde g^{(\lceil \delta(\lambda, \varepsilon) \rceil)}(0),
\quad \text{where } \tilde g(n) := n + g(n),
\]
serves as a rate of uniform metastability (c.f.\ Definition \ref{def:unifmeta}) for $\seq{A_n f}$. This yields, Theorem \ref{thrm:mainmeta}. Theorem \ref{thrm:main} then follows from Proposition \ref{prop:metaequivconv}.
\end{proof}

\begin{remark}
In Theorem 7.9 of \cite{neri-powell:pp:martingale}, the asymptotically sharper bound 
\[
\Delta(\lambda, \varepsilon) := \left( \frac{c K}{\varepsilon \lambda} \right)^2
\]
is established as a learnable rate of uniform convergence for $\seq{A_n f}$ in the $\sigma$-additive setting, where $c > 0$ is a universal constant and $K := \max\{1, \|f\|_1\}$. 

The methods employed in \cite{neri-powell:pp:martingale} crucially rely on countable additivity and therefore do not extend directly to the finitely additive framework. Determining whether comparably sharp learnable rates of uniform convergence can be obtained for the finitely additive version of the pointwise ergodic theorem remains an open problem, which we leave for future investigation.
\end{remark}
\begin{remark}
\label{rem:pointwise}
There is an alternative way to obtain Theorem \ref{thrm:main} without appealing to Proposition \ref{prop:abs:fluc:to:learn}. 
From a modulus of finite fluctuations, one can derive what is known as a rate of \emph{pointwise} metastability convergence 
(c.f.\ Theorem 4.16 of \cite{neri-powell:pp:martingale}), that is, a functional $\Phi$ satisfying
\[
\forall \varepsilon, \lambda \in (0,1)\, \forall g:\NN \to \NN\, \mu\Big(\exists n \le \Phi(\lambda, \varepsilon, g)\, 
\forall i,j \in [n, n + g(n)]\, (|X_i - X_j| \le \varepsilon)\Big) > 1 - \lambda.
\]
The notion of \emph{pointwise} metastable convergence was introduced in \cite{Avigad-Dean-Rute:Dominated:12}, 
where the motivation was to develop a quantitative version of Tao's metastable dominated convergence theorem 
(c.f.\ Theorem A.2 of \cite{tao:08:ergodic}). 
Tao's theorem shows that one can obtain rates of metastability for the convergence of the integrals in the dominated convergence theorem, 
which depend only on a slightly stronger variant of a rate of \emph{pointwise} metastability for the random variables appearing in its premise.

The authors of \cite{Avigad-Dean-Rute:Dominated:12} observed that if one has a  rate of \emph{uniform} metastability 
for the random variables in the premise of the dominated convergence theorem, then one can easily obtain a rate of metastability 
for the corresponding integrals. 
Their main result (c.f.\ Theorem 3.1 of \cite{Avigad-Dean-Rute:Dominated:12})  explicitly constructs a rate of uniform metastability from a rate of pointwise metastability 
(which they term a quantitative version of Egorov's theorem) 
and uses this quantitative result to strengthen Tao's metastable dominated convergence theorem. 
However, the construction given in \cite{Avigad-Dean-Rute:Dominated:12} is highly complex and involves a form of recursion on trees 
known as \emph{bar recursion} (introduced in \cite{spector:62:barrecursion}). 
As a consequence, the resulting bounds exhibit an enormous blow-up in complexity. For example, as shown on p. 11 of \cite{Avigad-Dean-Rute:Dominated:12}, even comparatively simple input rates of pointwise metastability can yield tower-exponential bounds.

It was shown in \cite{NeriPischke2023} that the results of \cite{Avigad-Dean-Rute:Dominated:12} 
also hold in finitely additive probability spaces. 
Thus, one may use the modulus of finite fluctuations obtained in Theorem \ref{thrm:finjones} 
to derive a rate of pointwise metastability, 
and then appeal to the finitely additive adaptation of \cite{Avigad-Dean-Rute:Dominated:12} 
given in \cite{NeriPischke2023} to obtain Theorem \ref{thrm:main}, 
albeit at the cost of very weak quantitative bounds on the resulting rate of uniform metastability.
\end{remark}
\medskip

{\bf Acknowledgements.} We would like to express our gratitude to Pedro Pinto, Nicholas Pischke, and Thomas Powell for their valuable comments and stimulating discussions during the preparation of this work. We are especially indebted to Thomas Powell for communicating his construction, from which Proposition \ref{prop:abs:fluc:to:learn} was obtained by a straightforward modification.

\bibliographystyle{acm}

\end{document}